\definecolor{gr}{rgb}{0.7, 1, 0.7}
\definecolor{rr}{rgb}{1, 0.7, 0.7}
\theoremstyle{plain} 
\newtheorem{theorem}{Theorem}[section]
\newtheorem{lemma}[theorem]{Lemma}
\theoremstyle{definition} 
\newtheorem{definition}[theorem]{Definition}
\theoremstyle{remark} 
\newtheorem{remark}[theorem]{Remark}
\renewcommand{\mathfrak}{\mathbf}
\renewcommand{\Im}{\,\mathrm{Im}\,}
\renewcommand{\Re}{\,\mathrm{Re}\,}
\newcommand{\maxmin}{\,\mathrm{maxmin}\,}
\newcommand{\maxmax}{\,\mathrm{maxmax}\,}
\newcommand{\ignore}[1]{}
\newcommand{\C}{\mathbb{C}}
\newcommand{\N}{\mathbb{N}}
\newcommand{\R}{\mathbb{R}}
\newcommand{\diam}{\operatorname{diam}}
\renewcommand{\Im}{\operatorname{Im}}
\newcommand{\feig}{\mathrm{\,Feig}}
\newcommand{\hd}{\mathrm{dim_H}}
\newcommand{\hypd}{\mathrm{dim_{hyp}}}
\def\mid{\operatorname{mid}}
\def\rad{\operatorname{rad}}
\newcommand{\calP}{\mathcal{P}}
\newcommand{\ivC}{\textbf{\textsl{c}}}
\newcommand{\ivM}{\textbf{\textsl{m}}}
\newcommand{\ivMM}{\textbf{\textsl{M}}}
\newcommand{\ivW}{\textbf{\textsl{w}}}
\newcommand{\ivX}{\textbf{\textsl{x}}}
\newcommand{\ivZ}{\textbf{\textsl{z}}}
\newcommand{\lo}[1]{\underline{#1}}
\newcommand{\hi}[1]{\overline{#1}}
\title[]{Lower bounds on the Hausdorff dimension of some Julia sets}
\author{Artem Dudko}
\address{Institute of Mathematics of Polish Academy of Sciences, Warsaw, Poland}
\email{adudko@impan.pl}
\author{Igors Gorbovickis}
\address{Jacobs University, Bremen, Germany}
\email{i.gorbovickis@jacobs-university.de}
\author{Warwick Tucker}
\address{School of Mathematics, Monash University, Melbourne, Australia}
\email{Warwick.Tucker@monash.edu}
\subjclass[2010]{}
\keywords{}
\date{\today}
\begin{document}
\begin{abstract}
We present an algorithm for a rigorous computation of lower bounds on the Hausdorff dimensions of Julia sets for a wide class of holomorphic maps. 
	We apply this algorithm to obtain lower bounds on the Hausdorff dimension of the Julia sets of some infinitely renormalizable real quadratic polynomials, including the Feigenbaum polynomial $p_\feig(z)=z^2+c_{\feig}$.
	In addition to that, we construct a piecewise constant function on $[-2,2]$ that provides rigorous lower bounds for the Hausdorff dimension of the Julia sets of all quadratic polynomials $p_c(z) = z^2+c$ with $c \in [-2,2]$.
	Finally, we verify the conjecture of Ludwik Jaksztas and Michel Zinsmeister that the Hausdorff dimension of the Julia set of a quadratic polynomial $p_c(z)=z^2+c$, is a $C^1$-smooth function of the real parameter $c$ on the interval $c\in(c_{\feig},-3/4)$. 
\end{abstract}
\maketitle

	\section{Introduction}
\label{sec:introduction}

The Julia set of a rational map on the Riemann sphere can be defined as the set of all its Lyapunov unstable points. Informally speaking, this is the set of points near which the system behaves chaotically, i.e. has unpredictable long term behavior.
A natural question is how big and complex this set can be.
One of the classical measures of geometric complexity of a Julia set (or any other fractal set) is its Hausdorff dimension.

The study of the Hausdorff dimension of Julia sets of rational maps in general, and of quadratic polynomials in particular, has a long history.
It was proven that for large classes of rational maps, including the hyperbolic and parabolic maps, the Julia set has Hausdorff dimension strictly less than two (e.g., see \cite{Urbanski_94, McMullen-HDII-00, Graczyk_Smirnov_98, Graczyk_Smirnov_09}). At the same time there exist rational maps whose Julia sets are proper subsets of the Riemann sphere and have Hausdorff dimension equal to~2. For instance, M.~Shishikura showed that such maps are generic in the family of all quadratic polynomials $p_c(z)=z^2+c$ with the parameter $c$ on the boundary of the Mandelbrot set~\cite{Shishikura-HDM-98}.

In what follows, for every $c\in\C$, we let $p_c\colon\C\to\C$ denote the quadratic polynomial
$$
p_c(z) = z^2+c.
$$
The Julia set of $p_c$ will be denoted by $J_c$, and $\hd(J_c)$ will stand for its Hausdorff dimension.

The first examples of quadratic polynomials $p_c$ with the Julia set of positive Lebesgue measure (and hence, also with $\hd(J_c)=2$) were constructed by X.~Buff and A.~Ch\'eritat~\cite{Buff_Cherita_pos_area}.
Other interesting examples of quadratic polynomials were constructed by A.~Avila and M.~Lyubich in the sequence of papers \cite{AvilaLyubich-SmallHD-06, Avila_Lyubich_08, AvilaLyubich-Area-22}. Namely, they showed that there exist Feigenbaum maps (i.e., infinitely renormalizable quadratic polynomials $p_c$ with bounded combinatorics and a priori bounds) of the following three types: (i) $\hd(J_c)<2$; (ii) $\hd(J_c)=2$ and the Lebesgue measure of $J_c$ is zero; (iii) the Lebesgue measure of $J_c$ is strictly positive.

We note that even though, according to M.~Shishikura~\cite{Shishikura-HDM-98}, there are plenty of complex parameters $c\in\C$, for which the Julia set $J_c$ has the highest possible Hausdorff dimension (i.e., $\hd(J_c)=2$), it is not known, whether any of these parameters can be real. In fact, not much is known about how large the dimension $\hd(J_c)$ can be when the parameter $c$ is restricted just to the real line: it was shown in~\cite{Levin_Zin_13} that there exist $c\in\R$, for which $\hd(J_c)>4/3$, and this seems to be the highest lower bound that has been rigorously confirmed. Our first result improves this lower bound.

We recall that for a rational map $f\colon\hat{\C}\to\hat{\C}$, the hyperbolic dimension $\hypd(J(f))$ of the Julia set $J(f)$ is defined as the supremum of the Hausdorff dimensions of all forward invariant hyperbolic subsets of $J(f)$. It follows immediately from this definition that
$$
\hypd(J(f))\le \hd(J(f)).
$$

Let $c_{\feig}\approx -1.401155189$ be the Feigenbaum parameter, i.e., the limit of the period doubling bifurcations along the real axis, starting from the main cardioid of the Mandelbrot set. We prove the following:

\begin{theorem}\label{main_theorem_Feig}
	The hyperbolic dimension of the Julia set for the Feigenbaum polynomial $p_{c_{\feig}}$ satisfies $\hypd(J_{c_{\feig}})> 1.49781$.
\end{theorem}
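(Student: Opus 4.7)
The strategy is to exhibit a forward-invariant hyperbolic subset $\Lambda \subset J_{c_{\feig}}$ for which the lower bound $\hd(\Lambda) > 1.49781$ can be verified rigorously by the algorithm introduced earlier in the paper. Since $\hypd(J_{c_{\feig}})\geq \hd(\Lambda)$ by definition of the hyperbolic dimension, this suffices. The set $\Lambda$ will be realized as the limit set of a conformal iterated function system whose generators are carefully chosen inverse branches of some iterate $p_{c_{\feig}}^N$, acting on a topological disk $U$ that is compactly contained in its own union of preimages and disjoint from a neighbourhood of the postcritical Cantor set. On such a $U$ the dynamics is uniformly expanding, so the resulting conformal IFS is a hyperbolic repeller inside $J_{c_{\feig}}$.

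Once $\Lambda$ is set up, I invoke Bowen's formula: $\hd(\Lambda)$ equals the unique $s$ for which the topological pressure $P(s)$ of the potential $-s\log|(p_{c_{\feig}}^N)'|$ vanishes. To obtain the lower bound it is enough to produce $s_0 = 1.49781$ together with a rigorous certificate that $P(s_0) > 0$; by monotonicity of the pressure in $s$ this forces $\hd(\Lambda) > s_0$. The certificate takes the form of an interval-arithmetic lower estimate of iterated sums $\sum_{|\omega| = k} \inf_{z\in U} |g_\omega'(z)|^{s_0}$ exceeding $1$, or equivalently of a subadditive super-eigenvector for the transfer operator $\mathcal{L}_{s_0}$, which is precisely the output that the algorithm of the paper is designed to verify.

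Carrying this out requires three inputs handled with rigorous (interval) arithmetic: first, a tight enclosure of the Feigenbaum parameter $c_{\feig}$, obtained via a rigorous Newton-type fixed-point verification for the period-doubling renormalization operator, or simply by following the period-doubling cascade to an accuracy much finer than is needed for the derivative estimates; second, a symbolic choice of the disk $U$ and of the alphabet of admissible inverse branches of $p_{c_{\feig}}^N$ mapping $U$ into itself; and third, the transfer-operator computation at $s = s_0$. To reach the threshold $1.49781$, one cannot rely on inverse branches at a single scale: near the postcritical set the expansion is arbitrarily weak, so a single-depth IFS will always miss too much of the Julia set. I therefore expect the alphabet to be assembled from several renormalization depths, exploiting the approximate self-similarity supplied by the renormalization fixed point to include branches whose derivatives span many orders of magnitude.

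The main obstacle is precisely this multi-scale balancing act: on the one hand one needs sufficiently many, sufficiently deep branches to boost the pressure estimate past the target exponent; on the other hand every additional branch costs interval-arithmetic precision, and branches close to the postcritical set have derivatives that are both small and expensive to enclose. Verifying the non-degeneracy of the Koebe disks around $U$ and controlling the loss of precision in the iterated pull-backs will be the practical bottleneck. Once a successful IFS and the corresponding pressure certificate are produced by the algorithm, Bowen's formula combined with $\Lambda \subset J_{c_{\feig}}$ yields $\hypd(J_{c_{\feig}}) \geq \hd(\Lambda) > 1.49781$, completing the proof.
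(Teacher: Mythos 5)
Your route is genuinely different from the paper's, and is in fact the one the paper explicitly flags as ``not directly applicable'' here, namely McMullen's original expanding-map recipe: carve out a uniformly expanding conformal IFS inside $J_{c_\feig}$ disjoint from a neighbourhood of the postcritical Cantor set and then certify a lower dimension bound via classical Bowen theory and a transfer-operator pressure estimate. The paper instead works with the Markov partitions $\mathcal F_{c}^{(k)}$ whose $2^k$ tiles are the components of $p_c^{1-k}(D_2(0)\setminus[-2,2])$; these tiles accumulate on the postcritical Cantor set, so the dynamics on them is \emph{not} uniformly expanding, and the key insight (Theorem~\ref{main_inequality_theorem}) is that this does not matter for a one-sided bound: the McMullen matrix $M(\mathcal F,t)$, built from $\sup|f_i'|$ over tiles, always underestimates the geometric tree pressure, so its spectral-radius zero $\delta_{\mathcal F}$ lies below $t_{\mathcal F}=\hypd(J(f))$ (via the Bowen--Przytycki formula, Theorem~\ref{Bowen_formula_theorem}). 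No hyperbolic subset is ever exhibited, no neighbourhood of the critical orbit is cleared, and the whole verification reduces to a kneading-sequence enclosure of $c_\feig$ together with a Collatz-type enclosure of the Perron eigenvalue of a single primitive matrix. Your plan is valid in principle, since $\hypd$ is by definition a supremum over hyperbolic subsets, but it pays exactly the multi-scale cost you identify: the dimension of $J_{c_\feig}$ is realized by orbits shadowing the postcritical set, so excising a fixed neighbourhood of it loses dimension, forcing you to assemble branches across many renormalization depths with derivatives spanning orders of magnitude and to control Koebe constants at each depth. The paper's level-$k$ partition captures all scales down to roughly $2^{-k}$ automatically, and $\delta_{\mathcal F_c^{(k)}}\to\hypd(J_c)$ (Remark~\ref{Convergence_delta_remark}), so one simply pushes $k$ high enough --- a far more tractable computation than building and certifying a bespoke multi-scale IFS.
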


We note that the best known upper bound on the Hausdorff dimension of the Julia set $J_{c_{\feig}}$ is $\hd(J_{c_{\feig}})<2$ (see \cite{DS-20}). Together with Theorem~C from~\cite{Avila_Lyubich_08} this implies that
$$
\hypd(J_{c_{\feig}}) = \hd(J_{c_{\feig}}).
$$

The proof of Theorem~\ref{main_theorem_Feig} involves computer assistance. All computations are done with explicit estimates of the accumulated errors, so the obtained lower bound is rigorous. 

Using the exact same machinery as in the proof of Theorem~\ref{main_theorem_Feig}, we obtain lower bounds on $\hypd(J_{c})$ for other parameters $c$. We illustrate this fact in Figure~\ref{fig:lower_bounds_plot}(a), where we plot lower bounds for 1000 evenly spaced parameters $-2\le c\le 2$.
Some parameters, corresponding to infinitely renormalizable real quadratic polynomials with stationary combinatorics, are of particular dynamical relevance; we single them out in the following theorem and compute the corresponding lower bounds with higher accuracy. 
Notice that any such parameters $c$ is uniquely determined by a unimodal permutation $s$ on the set $\{0,1,\ldots,n-1\}$, where $n$ is the smallest number, for which $p_c^n$ is hybrid equivalent to $p_c$. The permutation $s = [s_0,\dots,s_{n-1}]$ encodes the order of the points $0,p_c(0),\ldots,p_c^{n-1}(0)$ on the real line: $p_c^j(0)<p_c^k(0)$ for $0\leqslant j,k\leqslant n-1$ if and only if $s_j<s_k$.
\begin{theorem}\label{main_theorem_c}
The hyperbolic dimension of the Julia sets for the real-symmetric Feigenbaum polynomials $p_{c}$ with stationary combinatorics of length up to 6, satisfies the following lower bounds:
\begin{table}[h]
\begin{tabular}{m{0.3\textwidth} | m{0.25\textwidth} | m{0.3\textwidth}}
Unimodal permutation, describing the combinatorics & Approximation of the parameter $c$ & Lower bound on $\hypd(J_c)$ \\
\hline
$[1, 0, 5, 4, 3, 2]$ & $-1.9963832458$ & $1.03142410217842673$ \\
$[1, 0, 4, 3, 2]$ & $-1.9855395300$ & $1.07439037960430284$ \\
$[2, 0, 5, 4, 3, 1]$ & $-1.9668432010$ & $1.08899058048555264$ \\
$[1, 0, 3, 2]$ & $-1.9427043547$ & $1.15803646135900751$ \\
$[3, 0, 5, 4, 2, 1]$ & $-1.9075041928$ & $1.14436916492704910$ \\
$[2, 0, 4, 3, 1]$ & $-1.8622240226$ & $1.20002817922795790$ \\
$[1, 0, 2]$ & $-1.7864402555$ & $1.29622703845671050$ \\
$[2, 0, 5, 3, 1, 4]$ & $-1.7812168061$ & $1.32518996605940753$ \\
$[3, 0, 4, 2, 1]$ & $-1.6319266544$ & $1.33306905791978458$ \\
$[4, 0, 5, 2, 3, 1]$ & $-1.4831818301$ & $1.41584133336146056$
\end{tabular}
\end{table}
\end{theorem}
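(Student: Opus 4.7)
The proof plan mirrors the one for Theorem~\ref{main_theorem_Feig} and reduces to running, for each of the ten entries in the table, the same rigorous algorithm applied to a different parameter. The pipeline has two distinct stages: first, rigorously localize the real parameter $c$ associated with a given unimodal permutation $s$ of length $n$ inside a narrow interval enclosure; second, feed that interval into the hyperbolic-dimension lower-bound routine from Theorem~\ref{main_theorem_Feig} and read off a certified lower bound on $\hypd(J_c)$.

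For the first stage, I would exploit the defining property of stationary combinatorics: the parameter $c$ is the unique real solution to the equation expressing that $p_c^n$, suitably restricted around the critical point and affinely renormalized, coincides with $p_c$, together with the order condition on $0,p_c(0),\dots,p_c^{n-1}(0)$ encoded by $s$. Starting from a high-precision floating-point approximation obtained along the corresponding cascade of period-$n$-tupling bifurcations, I would verify existence and uniqueness of the real root in a small box via an interval Newton operator applied to this fixed-point equation, producing an enclosure whose width is controlled at will.

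For the second stage, the algorithm of Theorem~\ref{main_theorem_Feig} constructs an expanding invariant Cantor subset $\Lambda\subset J_c$ from a carefully chosen collection of inverse branches of an iterate of $p_c$, assembles the associated transfer operator acting on a space of holomorphic functions over a rigorously controlled complex neighborhood, and then brackets the unique zero $t_*$ of the topological pressure $P(-t\log|p_c'|)$ on $\Lambda$ using interval arithmetic. By the Bowen formula for conformal expanding repellers, $\hd(\Lambda)=t_*$, and any validated lower bound on $t_*$ is in turn a lower bound on $\hypd(J_c)$. Running the procedure with the enclosed parameter for each of the ten combinatorics and taking the lower endpoint of the resulting interval yields the numbers tabulated in the statement.

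The principal obstacle is quantitative rather than conceptual. Several of the listed parameters lie very close to $-2$, where the postcritical orbit comes near the repelling fixed point, the contraction ratios of the inverse-branch system approach $1$, and the combinatorial depth needed to achieve a prescribed accuracy grows sharply; simultaneously, the propagation of the parameter enclosure through many compositions can inflate interval widths. The crux of the proof is therefore engineering: choosing the complex neighborhood, the depth of the Markov partition, and the working precision so that the interval arithmetic delivers the seventeen digits displayed in the third column while keeping the computation feasible for the longer permutations.
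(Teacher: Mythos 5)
Your high-level two-stage plan (rigorously enclose $c$, then run a certified dimension-lower-bound routine on the enclosure) matches the structure of the paper, but you mischaracterize the second stage in a way that would actually derail the proof. You describe the Theorem~\ref{main_theorem_Feig} engine as one that ``constructs an expanding invariant Cantor subset $\Lambda\subset J_c$'', assembles a transfer operator on a space of holomorphic functions, and then locates the zero of the pressure on $\Lambda$. That is essentially the Jenkinson--Pollicott / McMullen expanding-repeller machinery, which the authors explicitly rule out: the introduction states that those algorithms are ``designed for expanding maps, hence, is not directly applicable for establishing the results of the current paper''. The maps in Theorem~\ref{main_theorem_c} are infinitely renormalizable Feigenbaum polynomials; their Julia sets have no uniformly expanding Markov structure, the postcritical set accumulates on itself, and choosing a hyperbolic Cantor set $\Lambda$ by hand that captures a substantial part of the hyperbolic dimension is precisely the hard step one wants to avoid.

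What the paper actually does (Sections~\ref{lower_bounds_sec}--\ref{Partitions_section}) is different and more elementary: it builds the level-$k$ Markov partition of $\C\setminus[-2,2]$ by pulling back the two half-disks $U_1,U_2$ under $p_c$, forms the finite \emph{McMullen matrix} $M(\mathcal F_c^{(k)},t)$ whose entries use suprema of $|f_i'|$ over tile--to--tile transitions (so no expansion hypothesis is ever used), and invokes Theorem~\ref{main_inequality_theorem} to get $\delta_{\mathcal F_c^{(k)}}\le t_{\mathcal F_c^{(k)}}=\hypd(J_c)$. The certified lower bound is then the value of $t$ at which the rigorously bracketed spectral radius of the matrix crosses $1$, using primitivity, Collatz's bound, and the concavity of $t\mapsto\varrho(M(t))$ from Lemma~\ref{concavity_lemma}. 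No explicit $\Lambda$, no function-space transfer operator, and no pressure-zero bracketing on a repeller appear. Your proposal would therefore need to be replaced by (or argued to be equivalent to) this McMullen-matrix scheme; as written it does not prove the theorem because it rests on an expansion assumption that fails for these parameters.

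On the first stage, you propose an interval Newton operator applied to a renormalization fixed-point equation for $p_c^n$. The paper instead enumerates the admissible initial kneading words $k_1,\dots,k_n$, uses the self-similarity relations~\eqref{EqKneadingSeqCalc} to extend the kneading sequence, and locates $c$ by rigorous bisection of $[-2,0]$ through sign comparisons of $k_i(b)$ against $\mathrm{sign}\bigl(\tfrac{d}{dx}p_x^i(0)|_{x=b}\bigr)$. Both can produce a validated enclosure, but a Newton step on the renormalization equation requires a quantitative version of hyperbolicity of the renormalization operator to certify uniqueness inside the box, which is a much heavier input than the monotonicity of kneading sequences the paper relies on; if you keep your version you should say where that certification comes from.
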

The quantitative results of Theorem~\ref{main_theorem_Feig} and Theorem~\ref{main_theorem_c} are illustrated as red dots in Figure~\ref{fig:lower_bounds_plot}(b).
Conjecturally, all Julia sets from Theorem~\ref{main_theorem_c} have Hausdorff dimension strictly less than 2 (see~\cite{Dudko_20}), therefore, their Hausdorff and hyperbolic dimensions are expected to coincide.

\begin{figure}[h]
\begin{center}
\includegraphics[scale=0.55]{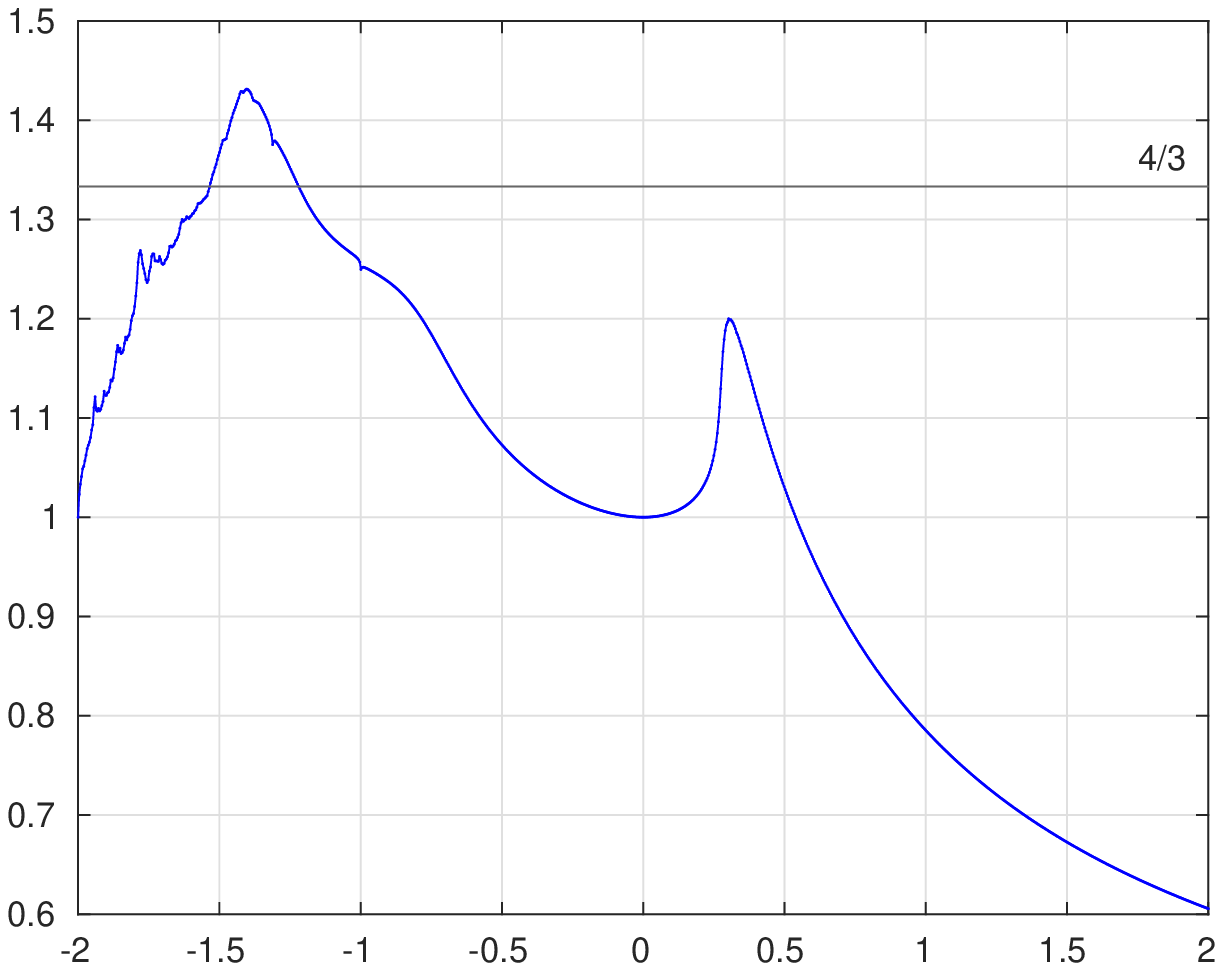}
\hspace*{-5mm}
\includegraphics[scale=0.55]{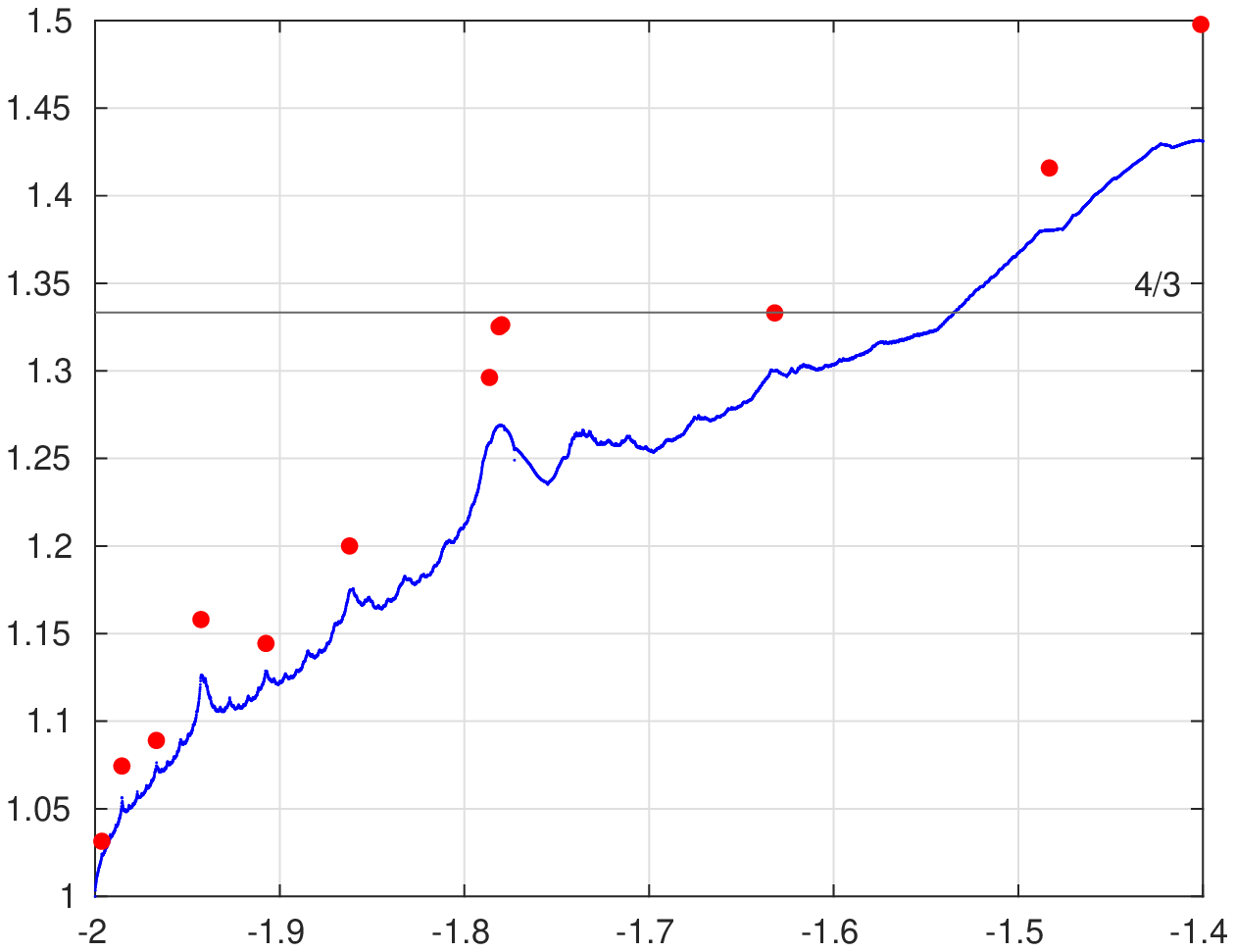}
\end{center}
\caption{Pointwise lower bounds on the hyperbolic dimension $\hypd(J_c)$. (a) 1000 evenly spaced parameters $c$ spanning the domain $[-2, 2]$. (b) The subdomain $[-2, -1.4]$ where red points correspond to Theorem~\ref{main_theorem_Feig} and Theorem~\ref{main_theorem_c}, and have been computed with higher accuracy than the other (blue) points.} 
\label{fig:lower_bounds_plot}
\end{figure}

Finally, we apply our computational machinery to verify that for any real parabolic parameter $c\in(c_\feig, -3/4)$ we have $\hd(J_c)>4/3$. Combining this with a very recent result of Ludwik Jaksztas and Michel Zinsmeister (see \cite{JZ20}), we obtain the following theorem, conjectured in~\cite{JZ20}:

\begin{theorem}\label{main_theorem_smooth}
	For all (real) parameters $c\in (c_\feig, -3/4)$ the mapping $c\mapsto\hd(J_c)$ is continuously differentiable.
\end{theorem}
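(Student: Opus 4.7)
The real interval $(c_{\feig}, -3/4)$ decomposes into the open set of hyperbolic parameters (the interiors of the period-$2^n$ components of the real Mandelbrot set) together with the countable set $P$ of parabolic parameters sitting at the endpoints of those components and accumulating at $c_\feig$. At every hyperbolic $c$, Ruelle's thermodynamic formalism already gives real-analyticity of $c\mapsto\hd(J_c)$, so my first move is to reduce the entire statement to showing that $c\mapsto\hd(J_c)$ has one-sided derivatives at each $c_0\in P$ which agree and vary continuously as $c\to c_0$ from either side. This reduction, and in fact the existence of the derivatives, is precisely what Jaksztas--Zinsmeister~\cite{JZ20} establish, but conditionally: their formula for the derivative of $\hd(J_c)$ at a real parabolic $c_0$, and its matching across $c_0$, requires the hypothesis $\hd(J_{c_0})>4/3$. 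Thus the whole problem collapses to verifying
\[
\hd(J_{c})>4/3 \qquad\text{for every } c\in P.
\]

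Since $P$ is countable but infinite, I cannot simply check the bound point-by-point with the algorithm. The plan is to split $P$ into a cofinite piece accumulating at $c_\feig$ and a finite remainder. For the cofinite piece I would combine Theorem~\ref{main_theorem_Feig}, which gives $\hypd(J_{c_\feig})>1.49781$, with the lower semicontinuity of hyperbolic dimension on the real line (hyperbolic subsets persist under small perturbations, so the sup defining $\hypd$ is lower semicontinuous in $c$). This yields an explicit $\delta>0$ such that $\hypd(J_c)>4/3$ for all real $c\in(c_\feig, c_\feig+\delta)$, and hence $\hd(J_c)\ge\hypd(J_c)>4/3$ at every parabolic $c$ in that right neighborhood of $c_\feig$. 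The parabolic parameters in the remaining compact interval $[c_\feig+\delta, -3/4)$ are finite in number (one endpoint per period-$2^n$ component up to some bounded $n$), and I would verify $\hd(J_c)>4/3$ at each of them individually by running the computer-assisted algorithm developed for Theorems~\ref{main_theorem_Feig} and~\ref{main_theorem_c}, adapted to apply at parabolic rather than Feigenbaum parameters. Once every $c\in P$ is cleared, Jaksztas--Zinsmeister's theorem turns the local $C^1$-statement at each $c_0\in P$ into global continuous differentiability on $(c_\feig, -3/4)$.

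\textbf{Main obstacle.} The analytic step has already been performed in~\cite{JZ20}; the real work is the rigorous certification of $\hd(J_c)>4/3$ at parabolic parameters. Two technical difficulties stand out. First, the algorithm produces lower bounds on $\hypd$, not $\hd$, so I need the a priori inequality $\hypd(J_c)\le\hd(J_c)$ together with enough margin above $4/3$ to absorb numerical slack. Second, and more seriously, the argument for the cofinite tail of $P$ has to be genuinely uniform: the algorithm must produce a neighborhood of $c_\feig$ on which the dimension bound $>4/3$ is provably valid, not merely point-evaluated, which requires interval-arithmetic control of the algorithm's inputs (the return map, its distortion, and the spectral radius estimate) jointly in the parameter $c$. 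This uniform parameter dependence, rather than the Jaksztas--Zinsmeister machinery itself, is where the rigorous bookkeeping will be most delicate.
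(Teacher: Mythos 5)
Your reduction to verifying $\hd(J_c)>4/3$ at the (countably many) parabolic parameters in $(c_\feig,-3/4)$, followed by an appeal to the Jaksztas--Zinsmeister theorem of~\cite{JZ20}, is exactly the strategy the paper takes, and the identification of the accumulation of $P$ at $c_\feig$ as the crux of the verification is correct.

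The genuine gap is in how you handle the tail of $P$. You write that combining $\hypd(J_{c_\feig})>1.49781$ with lower semicontinuity of $\hypd$ in the real parameter ``yields an explicit $\delta>0$'' such that $\hypd(J_c)>4/3$ on $(c_\feig,c_\feig+\delta)$. It does not: lower semicontinuity is a purely qualitative statement. It produces some $\delta>0$ but gives no algorithmic handle on its size, and the continuation-of-hyperbolic-sets argument behind it depends on an unquantified ``sufficiently small'' perturbation. This makes the second half of your plan circular. You propose to check the parabolics in $[c_\feig+\delta,-3/4)$ --- a finite set --- by computer, but you cannot enumerate that finite set without already knowing $\delta$. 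As written, the proposal does not terminate.

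You do spot the issue in your ``Main obstacle'' paragraph, where you correctly say the algorithm must certify the bound uniformly on a neighborhood of $c_\feig$, not merely pointwise. But that observation contradicts, rather than supports, the lower-semicontinuity step in the body of the argument. The fix is to drop lower semicontinuity entirely and make the uniform certification the actual argument: run the lower-bound machinery (the McMullen matrix $M(\mathcal F_c^{(k)},t)$ and its spectral radius) with interval arithmetic in the parameter $c$ over a compact interval $[c_\feig,a]$, obtaining a rigorous bound $\delta_{\mathcal F_c^{(k)}}>4/3$ valid for \emph{every} $c$ in that interval simultaneously. Since the tile structure for real $c$ is combinatorially stable and the margin $1.49781-4/3\approx 0.16$ at $c_\feig$ is comfortable, this is feasible. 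Once $[c_\feig,a]$ is handled, the parabolics in $[a,-3/4)$ really are finite and enumerable (they are the period-doubling parameters $c_n$ with $c_n\ge a$), and you can check each one, again with interval enclosures of the parameter value since the $c_n$ are not exactly representable. Alternatively one could close the tail with a renormalization argument (small Julia sets of $p_{c_n}$ hybrid equivalent to $J_{-3/4}$, uniform geometry from a priori bounds, continuity of $\hd$ on compact families of parabolic quadratic-like maps), but that machinery is heavier than the direct interval-in-$c$ computation, and the latter is what the paper's piecewise-constant lower-bound construction (Figure~\ref{fig:stairs_plot}) is built to deliver.
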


Refining and extending the computations required to prove Theorem~\ref{main_theorem_smooth}, we can construct a piecewise constant function whose graph forms a lower bound on the hyperbolic dimension over the entire parameter domain $[-2,2]$. An example of this is illustrated in Figure~\ref{fig:stairs_plot}(a), which constitutes a graphic representation of over 21000 small theorems (one lower bound for each subinterval). Note that the graph generally appears less smooth as the parameter $c$ decreases. Indeed, the Hausdorff dimension is expected to behave in a rather irregular manner for parameters $c\in\R$ outside of hyperbolic components, as indicated in Figure~\ref{fig:lower_bounds_plot} and Figure~\ref{fig:stairs_plot}(b). (See also~\cite{DGM-HD-20} for the study of the Hausdorff dimension $\hd(J_c)$ when $c\in\R$ is close to $-2$.)

\begin{figure}[h]
\begin{center}
\includegraphics[scale=0.55]{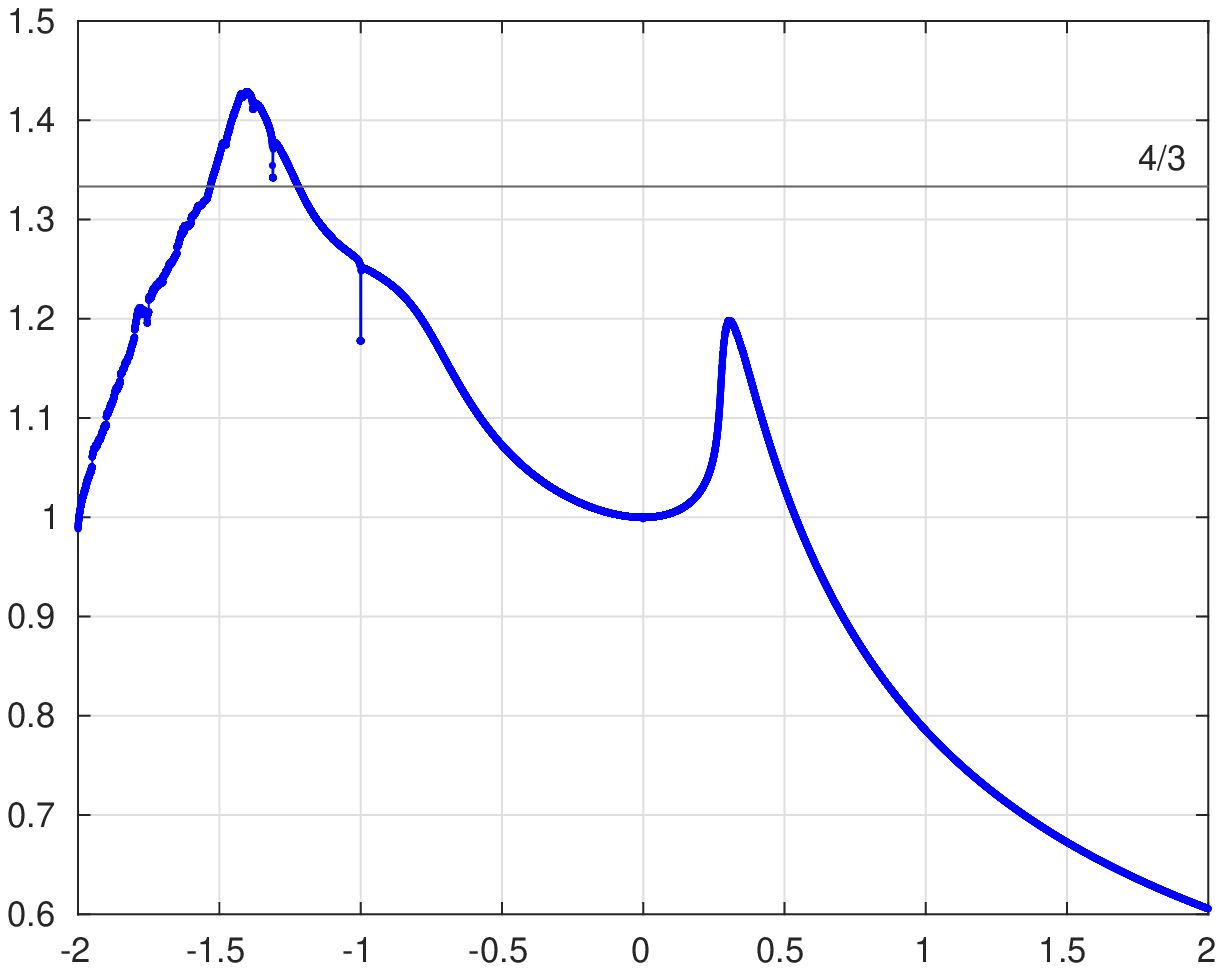}
\hspace*{-5mm}
\includegraphics[scale=0.55]{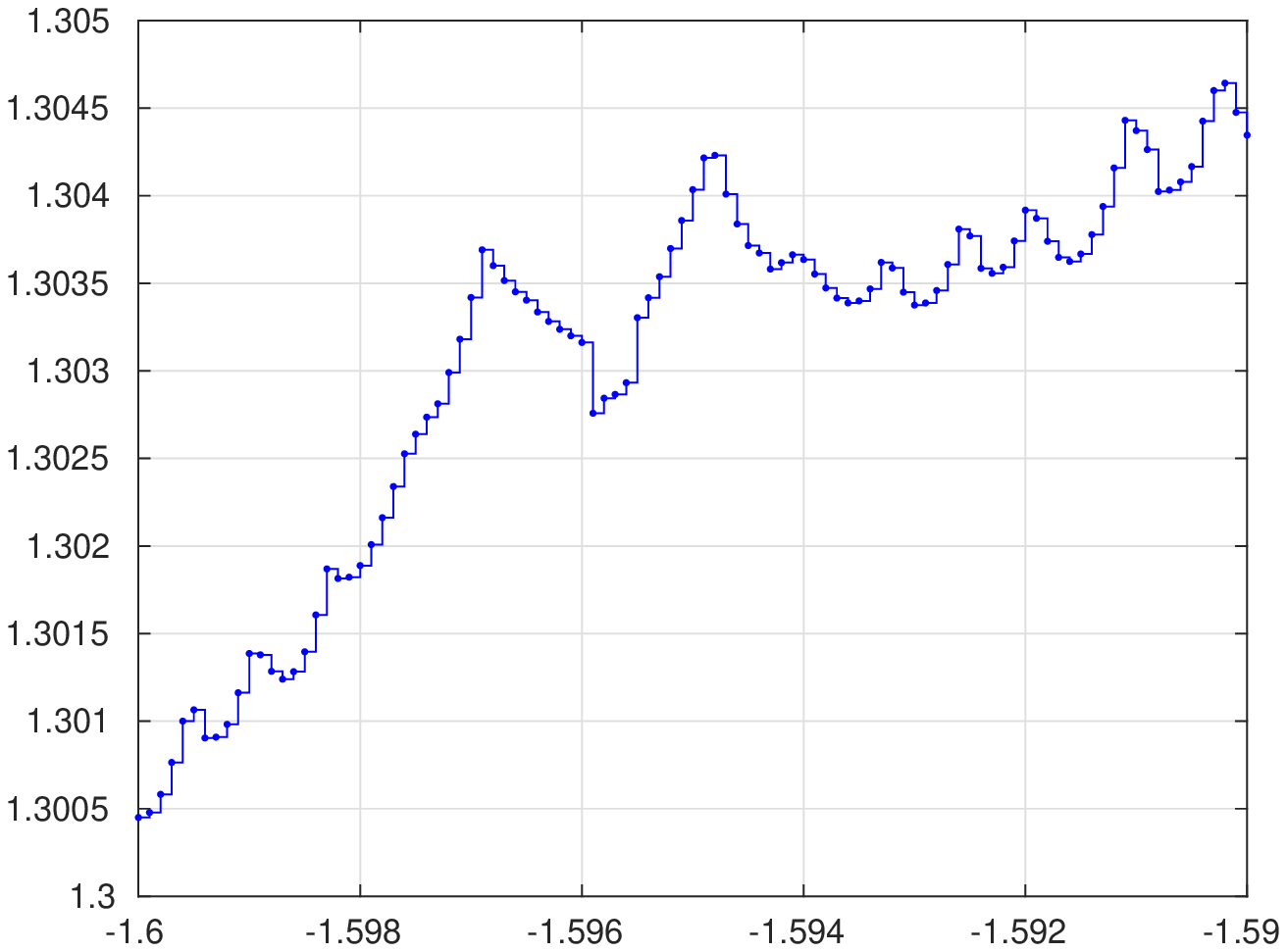}
\end{center}
\caption{A piecewise constant function bounding the hyperbolic dimension from below over the domain (a) $[-2, 2]$ where the right-most region displays ``regular'' behaviour. (b) A zoom into the region $[-1.6, -1.59]$ where some ``irregular'' behaviour is visible.}
\label{fig:stairs_plot}
\end{figure}

The paper is structured as follows:
in the first part of the paper (Sections~\ref{lower_bounds_sec} and~\ref{tree_pressure_section}) we present a general method for computation of lower bounds on the hyperbolic dimension of the Julia sets (see Theorem~\ref{main_inequality_theorem}). Our method is a rather straightforward modification of C.~McMullen's eigenvalue algorithm for expanding maps~\cite{McMullen-HDIII-98}. However, we show that our technique is applicable to a wider class of holomorphic maps, including non-expanding ones.
We also expect that the same method can be used for effective computation of lower bounds on the dimension of the limit sets of some Kleinian groups.

In the second part of this paper (Sections~\ref{Partitions_section} and~\ref{sec:the_computer_assisted_proof}) we apply our techniques to obtain rigorous lower bounds on the hyperbolic (and hence, also Hausdorff) dimension of the Julia sets that appear in Theorems~\ref{main_theorem_Feig},~\ref{main_theorem_c} and~\ref{main_theorem_smooth}, as well as in the discussion above. 
The involved computations are done using computer-assisted means with explicit estimates of the accumulated errors.

Last but not least, let us mention that another algorithm for estimating the Hausdorff dimension of the Julia sets was constructed by O.~Jenkinson and M.~Pollicott~\cite{JP-02}. However, this algorithm, as well as the original algorithm of C.~McMullen~\cite{McMullen-HDIII-98}, is designed for expanding maps, hence, is not directly applicable for establishing the results of the current paper. 
Estimates on the Hausdorff dimension of various other dynamically defined objects were obtained by many other authors (see for example~\cite{Pollicott_Vytnova_22}, and the references therein).

\textbf{Acknowledgments:} We would like to thank Feliks Przytycki for fruitful discussions. The research of I.G. was funded by the Deutsche Forschungsgemeinschaft (DFG, German Research Foundation) -- project number 455038303. A.D. was partially supported by National Science Centre, Poland, Grant OPUS21 "Holomorphic dynamics, fractals, thermodynamic formalism", 2021/41/B/ST1/00461.

\section{Lower bounds on the Hausdorff dimension}\label{lower_bounds_sec}

\subsection{Partitioned holomorphic dynamical systems}

We will consider a rather wide class of holomorphic dynamical systems, possessing certain kinds of Markov and transitivity properties.

\begin{definition}\label{hol_sys_def}
	A partitioned holomorphic dynamical system $\mathcal F$ on $\hat\C$ is a finite collection of holomorphic maps
	\begin{equation}\label{hol_sys_maps_eq}
	f_j\colon U_j\to \hat\C, \qquad\text{where } j=1,\dots,m,
	\end{equation}
	such that
	\begin{enumerate}
		\item the domains $U_j$ are pairwise disjoint
		open sets in $\hat\C$, and each $f_j$ is a proper branched (or unbranched) covering map of $U_j$ onto its image;
		\item (Markov property) for every pair of indices $i$, $j$, the image $f_i(U_i)$ either contains $U_j$, or is disjoint from it;
		\item (transitivity) for each domain $U_j$, there exists a finite sequence of maps $f_j=f_{i_0},f_{i_1},\dots,f_{i_k}$ from $\mathcal F$, such that the composition $f_{i_k}\circ\dots\circ f_{i_1}\circ f_{i_0}$ is defined on some open subset $U\subset U_j$ and maps $U$ surjectively onto the union $\cup_{i=1}^m U_i$.
	\end{enumerate}
The domains $U_j$ of the maps $f_j$ will be called the \textit{tiles} of $\mathcal F$.
\end{definition}

We note that, according to this definition, every rational or polynomial-like map is a partitioned holomorphic dynamical system that consists of a single map $f$. Furthermore, the restriction of a rational map on a Yoccoz puzzle of some fixed level can also be viewed as a partitioned holomorphic dynamical system with the number of maps being equal to the number of puzzle pieces. 

Our definition of a partitioned holomorphic dynamical system can also be viewed as a generalization of a Markov partition for a conformal dynamical system from~\cite{McMullen-HDIII-98}. 
There is also a strong resemblance with the complex box mappings, introduced in~\cite{Kozlovski_vStrien_09}.

For a partitioned holomorphic dynamical system $\mathcal F$ that consists of the maps $f_1,\dots, f_m$ as in~(\ref{hol_sys_maps_eq}), let
$$
\mathcal U = \bigcup_{j=1}^m U_j \qquad\text{and}\qquad \mathcal F(\mathcal U) = \bigcup_{j=1}^m f_j(U_j)
$$
denote the union of the domains $U_j$ of the maps $f_j$ and respectively the union of the images $f_j(U_j)$. We say that $\mathcal U$ and $\mathcal F(\mathcal U)$ are the \textit{domain} and respectively, the \textit{range} of $\mathcal F$. Note that due to the transitivity property in Definition~\ref{hol_sys_def}, we have the inclusion $\mathcal U\subset\mathcal F(\mathcal U)$.

Since the domains $U_1,\dots,U_m$ are pairwise disjoint, we can view the partitioned holomorphic dynamical system $\mathcal F$ as a single map
$$
\mathcal F\colon\mathcal U\to\mathcal F(\mathcal U).
$$
Let $C(\mathcal F)$ be the set of all critical values of $\mathcal F$. That is,
$$
C(\mathcal F):= \{z\in\mathcal F(\mathcal U)\colon \exists w\in\mathcal F^{-1}(z),\text{ such that } \mathcal F'(w)=0\}.
$$
Every point $z\in\mathcal U$ has a finite or infinite forward orbit $\mathcal O(z)=\{z,\mathcal F(z),\mathcal F^{\circ 2}(z),\dots\}$ under the dynamics of $\mathcal F$. 
The postcritical set of $\mathcal F$,
$$
PC(\mathcal F) = \overline{\bigcup_{z\in C(\mathcal F)}\mathcal O(z)}
$$
is the closure of the union of all critical orbits of $\mathcal F$.

\begin{remark}\label{F_assumption_remark}
	\textbf{Assumption on $\mathcal F$:} In the remaining part of the paper we will always assume that $\mathcal F\colon\mathcal U\to\mathcal F(\mathcal U)$ is a partitioned holomorphic dynamical system consisting of maps~(\ref{hol_sys_maps_eq}) defined on the tiles $U_1,\dots,U_m$, and such that $\mathcal F(\mathcal U)\setminus PC(\mathcal F)$ is a nonempty open set. Due to the transitivity property of partitioned holomorphic dynamical systems, it follows that $U_j\setminus PC(\mathcal F)$ is also a nonempty open set, for each $j=1,\dots, m$. 
	Furthermore, for simplicity of exposition, we will also assume that the number of tiles $U_j$ is at least two.
\end{remark}

	\subsection{Geometric tree pressure and Hausdorff dimension}

For every positive integer $k\in\N$, a point $z\in \mathcal U\setminus PC(\mathcal F)$ and a real number $t>0$, consider the sum
$$
\Theta^k(z,t,\mathcal F) = \sum_{w\in\mathcal U\colon z=\mathcal F^{\circ k}(w)} |(\mathcal F^{\circ k})'(w)|^{-t},
$$
where the summation is taken over all $w\in\mathcal U$, such that $\mathcal F^{\circ k}(w)=z$. The derivatives here are considered with respect to the spherical metric on $\hat{\C}$. The \textit{geometric tree pressure} $P(z,t,\mathcal F)$ is then defined as
$$
P(z,t,\mathcal F) = \limsup_{k\to\infty}\frac{1}{k}\log \Theta^k(z,t,\mathcal F).
$$
For rational maps of the Riemann sphere, the geometric tree pressure was studied in~\cite{Prz_99}. Various equivalent definitions were also considered in~\cite{PRS_2004} and~\cite{Prz22}.

It is not difficult to show that if for every $j=1,\dots,m$, the set $U_j\setminus PC(\mathcal F)$ is path connected, then the pressure $P(z,t,\mathcal F)$ does not depend on the point $z\in \mathcal U\setminus PC(\mathcal F)$. Indeed, under these conditions, by the Koebe Distortion Theorem (see Lemmas~\ref{Koebe1} and~\ref{Koebe2}), the pressure $P(z,t,\mathcal F)$ does not depend on the choice of $z$ within the same domain $U_j$. Then, the transitivity property of Definition~\ref{hol_sys_def} implies that $P(z,t,\mathcal F)$ is the same for any choice of $z$ in $\mathcal U\setminus PC(\mathcal F)$.

In case if $\mathcal F$ is a rational map on the Riemann sphere (without any assumptions on the geometry of the postcritical set), it was shown in~\cite{Prz_99} that $P(z,t,\mathcal F)$ is independent of the choice of $z\in\hat\C\setminus PC(\mathcal F)$. 
We do not attempt here to prove a similar statement for all partitioned holomorphic dynamical systems $\mathcal F$. Instead, it is sufficient for us to consider the pressure function
$$
P(t,\mathcal F) = \inf_{z\in \mathcal U\setminus PC(\mathcal F)} P(z,t,\mathcal F),
$$
and define the Poincar\'e exponent $t_\mathcal F$ as follows: 
$$
t_\mathcal F = \inf\{t>0\colon P(t,\mathcal F)\le 0\}.
$$

\begin{lemma}\label{pressure_at_2_lemma}
	The inequality $P(2,\mathcal F)\le 0$ holds. Hence, the set $\{t>0\colon P(t,\mathcal F)\le 0\}$ is nonempty and the Poincar\'e exponent 
	$t_\mathcal F$ is well defined.	
\end{lemma}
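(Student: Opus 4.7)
The plan is to exploit the fact that the exponent $t=2$ corresponds precisely to area preservation under the pushforward by inverse branches. The key identity I would establish is an $L^1$-bound
\begin{equation*}
\int_{\hat\C} \Theta^k(z,2,\mathcal F)\, d\mu_\sigma(z)\ \le\ \mu_\sigma(\mathcal U)\ \le\ 4\pi,
\end{equation*}
where $\mu_\sigma$ is the spherical area measure (and $4\pi$ is the total spherical area of $\hat\C$). Once this is in place, a soft measure-theoretic argument extracts a point $z$ at which the growth of $\Theta^k(z,2,\mathcal F)$ is subexponential, which is exactly what $P(z,2,\mathcal F)\le 0$ says.

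To justify the integral bound, I would partition the admissible length-$k$ itineraries $(i_0,i_1,\dots,i_{k-1})$ compatible with the Markov property, each giving a subdomain $V_{\bar\imath}\subset U_{i_0}$ on which $\mathcal F^{\circ k}$ is defined as the composition $f_{i_{k-1}}\circ\cdots\circ f_{i_0}$. By the Markov and proper-covering hypotheses in Definition~\ref{hol_sys_def}, the $V_{\bar\imath}$ are pairwise disjoint and their union is $\mathcal U_k$, the set of points whose first $k$ iterates all lie in $\mathcal U$. Each branch of $\mathcal F^{\circ k}|_{V_{\bar\imath}}$ is a proper branched cover onto $f_{i_{k-1}}(U_{i_{k-1}})$, with critical set of $\mu_\sigma$-measure zero. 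Applying the (spherical) change-of-variables formula branch by branch,
\begin{equation*}
\int_{\hat\C} \sum_{w\in V_{\bar\imath},\, \mathcal F^{\circ k}(w)=z} |(\mathcal F^{\circ k})'(w)|_\sigma^{-2}\, d\mu_\sigma(z)\ =\ \mu_\sigma(V_{\bar\imath}),
\end{equation*}
and summing over itineraries gives $\mu_\sigma(\mathcal U_k)\le\mu_\sigma(\mathcal U)\le 4\pi$.

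From the $L^1$-bound I would conclude by a Chebyshev/Borel--Cantelli argument. Fix $\epsilon>0$; then
\begin{equation*}
\mu_\sigma\bigl\{z\in \mathcal F(\mathcal U)\colon \Theta^k(z,2,\mathcal F)>e^{k\epsilon}\bigr\}\ \le\ 4\pi\, e^{-k\epsilon},
\end{equation*}
and since $\sum_k 4\pi e^{-k\epsilon}<\infty$, for $\mu_\sigma$-almost every $z$ in $\mathcal F(\mathcal U)$ we have $\Theta^k(z,2,\mathcal F)\le e^{k\epsilon}$ for all sufficiently large $k$. Intersecting over a sequence $\epsilon_n\downarrow 0$ shows that $\limsup_k k^{-1}\log\Theta^k(z,2,\mathcal F)\le 0$ for $\mu_\sigma$-a.e.\ $z$. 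By the standing assumption in Remark~\ref{F_assumption_remark}, the set $\mathcal U\setminus PC(\mathcal F)$ is a nonempty open set, hence has positive spherical area, so we can pick such a point $z\in\mathcal U\setminus PC(\mathcal F)$; this gives $P(z,2,\mathcal F)\le 0$, and therefore $P(2,\mathcal F)\le 0$.

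I expect the only delicate step to be a clean bookkeeping of branches in the area-formula identity: ensuring that the domains $V_{\bar\imath}$ indeed tile $\mathcal U_k$ with multiplicity one, and handling the (measure-zero) set of critical points where $\mathcal F^{\circ k}$ fails to be a local homeomorphism. Both issues are handled by the Markov and properness hypotheses, together with the fact that the critical set is discrete, so the branched-cover change of variables applies on each $V_{\bar\imath}$ after removing a set of $\mu_\sigma$-measure zero.
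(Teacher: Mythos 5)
Your proof is correct, but it takes a genuinely different route from the paper's. The paper's argument is pointwise: it fixes an arbitrary $z\in\mathcal U\setminus PC(\mathcal F)$, takes a small spherical disk $D$ around it inside $\mathcal U\setminus PC(\mathcal F)$, and invokes the Koebe Distortion Theorem (via Lemmas~\ref{Koebe1} and~\ref{Koebe2}) to conclude that for each $w\in\mathcal F^{-k}(z)$ the term $|(\mathcal F^{\circ k})'(w)|^{-2}$ is comparable, uniformly in $k$, to the spherical area of the component of $\mathcal F^{-k}(D)$ containing $w$. Since these components are pairwise disjoint subsets of the sphere, $\Theta^k(z,2,\mathcal F)$ is bounded by a constant independent of $k$, which gives $P(z,2,\mathcal F)\le 0$ directly. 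Your argument instead integrates $\Theta^k(\cdot,2,\mathcal F)$ over $\hat\C$, uses the branched-cover change-of-variables identity to get the $L^1$ bound $\int\Theta^k\,d\mu_\sigma\le\mu_\sigma(\mathcal U)$, and then extracts via Chebyshev and Borel--Cantelli a full-measure set of $z$ with subexponential growth, intersecting with the positive-measure open set $\mathcal U\setminus PC(\mathcal F)$. What each approach buys: the paper's Koebe-based argument gives the stronger conclusion that $\Theta^k(z,2,\mathcal F)$ is uniformly bounded in $k$ (not merely subexponential) and works for \emph{every} $z\in\mathcal U\setminus PC(\mathcal F)$, but it leans on the distortion lemmas and the standing assumption that there are at least two tiles (needed for the uniform Koebe space in Lemma~\ref{Koebe2}); your approach is softer, bypasses distortion control entirely, and uses only the change-of-variables formula plus elementary measure theory, at the cost of only getting the estimate for a.e.\ $z$ and only subexponential rather than bounded growth --- both of which are amply sufficient here since $P(t,\mathcal F)$ is defined as an infimum over $z$.
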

\begin{proof}
	Let $z\in\mathcal U\setminus PC(\mathcal F)$ be an arbitrary point, and choose a sufficiently small spherical disk $D\subset \mathcal U\setminus PC(\mathcal F)$, centered at $z$. Then, according to the Koebe Distortion Theorem (see Lemmas~\ref{Koebe1} and~\ref{Koebe2}), for any $k\in\N$ and any $w\in\mathcal F^{-k}(z)$, the term $|(\mathcal F^{\circ k})'(w)|^{-2}$ is commensurable with the spherical area of the connected component of $\mathcal F^{-k}(D)$, containing $w$. Summing up over all such connected components, we conclude that $\Theta^k(z,2,\mathcal F)$ is uniformly bounded in $k$. Thus, $P(z,2,\mathcal F)\le 0$.
\end{proof}

\begin{remark}\label{metric_remark}
	If $\infty\not\in\mathcal U$ and $\mathcal U$ is a bounded subset of $\C$, then all computations in this, as well as in the following sections can be done in the Euclidean metric instead of the spherical one. Since these two metrics are equivalent on $\mathcal U$, this does not change the pressure function $P(t,\mathcal F)$. All further proofs repeat verbatim.
\end{remark}

Let us mention an important relation between the Poincar\'e exponent $t_\mathcal F$ and the dimension of the Julia set of $\mathcal F$, in case if $\mathcal F$ is a rational map. More precisely, if $f\colon\hat{\C}\to\hat{\C}$ is a rational map of degree $d\ge 2$, let $J(f)\subset\hat{\C}$ denote its Julia set. We recall that the hyperbolic dimension $\hypd(J(f))$ of the Julia set $J(f)$ is defined as the supremum of the Hausdorff dimensions of all forward invariant hyperbolic subsets of $J(f)$. It follows immediately from this definition that
$$
\hypd(J(f))\le \hd(J(f)).
$$
The following fundamental result is a generalized version of Bowen's formula (see~\cite{Bowen_79,Prz_99}).

\begin{theorem}\label{Bowen_formula_theorem}
Let $\mathcal F\colon\mathcal U\to\mathcal F(\mathcal U)$ be a partitioned holomorphic dynamical system that is a restriction of a rational map $f\colon\hat{\C}\to\hat{\C}$ of degree $d\ge 2$. Assume that $\mathcal F$ viewed as a single map has the same topological degree $d$ as the rational map $f$. Then
$$
t_\mathcal F = \hypd(J(f)).
$$
\end{theorem}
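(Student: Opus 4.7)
The plan is to prove the two inequalities $t_{\mathcal F}\leq \hypd(J(f))$ and $\hypd(J(f))\leq t_{\mathcal F}$ separately. One direction rests on the Bowen--Przytycki formula for rational maps from~\cite{Prz_99}, stating that for a rational map $f$ of degree $\ge 2$ one has $\hypd(J(f)) = \inf\{t>0\colon P(z,t,f)\leq 0\}$ for every $z\in\hat\C\setminus PC(f)$; the other direction rests on the hyperbolic-set construction developed in the first half of the paper (Theorem~\ref{main_inequality_theorem}).

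For the upper bound $t_{\mathcal F}\leq\hypd(J(f))$, the key observation is that the degree hypothesis forces the preimage trees of $\mathcal F$ and of $f$ to coincide on $\mathcal U$. Fix $z\in\mathcal U$; the transitivity axiom gives $\mathcal U\subset\mathcal F(\mathcal U)$, so $z\in\mathcal F(\mathcal U)$ and $\mathcal F^{-1}(z)=f^{-1}(z)\cap\mathcal U$ already contains $d$ preimages counted with multiplicity, hence equals the full fiber $f^{-1}(z)\subset\mathcal U$. Iterating this observation (the hypotheses transfer at every stage since $f^{-k}(z)\subset\mathcal U\subset\mathcal F(\mathcal U)$), one obtains $\mathcal F^{-k}(z)=f^{-k}(z)$ for every $k\geq 1$. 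Because $\mathcal F$ is a literal restriction of $f$, the multipliers agree, so
$$
\Theta^k(z,t,\mathcal F)=\Theta^k(z,t,f)\quad\text{and hence}\quad P(z,t,\mathcal F)=P(z,t,f)
$$
for every $z\in\mathcal U$ and every $t>0$. Choosing $z\in\mathcal U\setminus PC(f)$ — nonempty since $PC(f)$ has empty interior and Remark~\ref{F_assumption_remark} supplies an open set of admissible $z$ — and applying Przytycki's formula, the trivial bound $P(t,\mathcal F)\leq P(z,t,\mathcal F)=P(z,t,f)$ immediately yields $t_{\mathcal F}\leq\hypd(J(f))$.

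For the lower bound $\hypd(J(f))\leq t_{\mathcal F}$, I would invoke Theorem~\ref{main_inequality_theorem}, which produces, for every $t<t_{\mathcal F}$, a compact $\mathcal F$-invariant expanding Cantor set $K\subset\mathcal U$ with $\hd K>t$. Because $\mathcal F$ is a restriction of $f$, any such $K$ is automatically $f$-invariant, expanding under $f$, and therefore contained in the Julia set $J(f)$. Consequently $\hypd(J(f))\geq\hd K>t$, and letting $t\nearrow t_{\mathcal F}$ yields the desired inequality.

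The delicate point in this outline is the iterated preimage identity $\mathcal F^{-k}(z)=f^{-k}(z)$: one has to interpret the phrase ``topological degree of $\mathcal F$ as a single map'' so that the equality of fiber cardinalities (with multiplicity) forces set-theoretic equality of the fibers, and then propagate this identity through the induction on $k$ without loss at critical values or at tile boundaries. Once the two tree pressures are matched, the upper bound follows at once from Przytycki's theorem, while the lower bound reduces to a direct consequence of the hyperbolic-set machinery that is the main technical output of Sections~\ref{lower_bounds_sec}--\ref{tree_pressure_section}.
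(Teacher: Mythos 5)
The paper itself gives no proof of Theorem~\ref{Bowen_formula_theorem}; it is stated as ``a generalized version of Bowen's formula'' with a pointer to \cite{Bowen_79,Prz_99}, so there is no in-text argument to compare against. Your attempt, read on its own terms, contains two concrete problems.

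First, the attribution in your second paragraph is wrong. Theorem~\ref{main_inequality_theorem} asserts only that $\delta_{\mathcal F}\le t_{\mathcal F}$, where $\delta_{\mathcal F}$ is defined via the spectral radius of McMullen's matrix; its proof in Section~\ref{tree_pressure_section} goes through the pseudo-orbit pressure $p(z,t,\mathcal F)$ and the Perron--Frobenius theorem. Nowhere in Sections~\ref{lower_bounds_sec}--\ref{tree_pressure_section} is a compact $\mathcal F$-invariant expanding Cantor set constructed, and nothing in the paper's machinery produces one; it is explicitly a point of the paper that the method extends McMullen's eigenvalue approach to maps which are not expanding, so one cannot expect an invariant hyperbolic Cantor set to come for free. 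If you want a Katok-type hyperbolic set with dimension approaching the pressure threshold, that is exactly the hard content of the Bowen--Przytycki formula in \cite{Prz_99} which the theorem is citing, not a consequence of the present paper.

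Second, even granting your claimed hyperbolic-set construction, your two paragraphs establish the same inequality. Paragraph one concludes $t_{\mathcal F}\le\hypd(J(f))$. Paragraph two is headed ``lower bound $\hypd(J(f))\le t_{\mathcal F}$'' but the argument ends with ``$\hypd(J(f))\ge \hd K > t$, and letting $t\nearrow t_{\mathcal F}$,'' which gives $\hypd(J(f))\ge t_{\mathcal F}$ --- logically identical to paragraph one. The direction $\hypd(J(f))\le t_{\mathcal F}$ is never addressed. Note also that your pressure identity $P(z,t,\mathcal F)=P(z,t,f)$ by itself only delivers the one-sided bound $P(t,\mathcal F)\le P(z_0,t,f)$, because $P(t,\mathcal F)$ is an infimum over $z\in\mathcal U\setminus PC(\mathcal F)$ and $PC(\mathcal F)$ may be a proper subset of $PC(f)$; to turn this into an equality one would have to rule out that the infimum drops below the Bowen value at some $z\in PC(f)\cap\mathcal U\setminus PC(\mathcal F)$, which needs an argument. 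So the overall plan is incomplete, and the cited mechanism for the missing half does not exist in the paper.
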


We note that even though hyperbolic and Hausdorff dimensions of a Julia set are not necessarily equal in general, they are known to be equal for certain classes of rational maps $f$ including hyperbolic ones, Collet-Eckmann rational maps~\cite{Prz_99}, and Feigenbaum maps whose Julia sets have zero Lebesgue measure~\cite{Avila_Lyubich_08}.

	\subsection{Construction of a lower bound on $t_\mathcal F$}

A key difficulty one can face when working with a Poincar\'e exponent $t_\mathcal F$ is that they are often difficult to compute. In this subsection we provide a quantity $\delta_\mathcal F$ that is usually much more tangible for direct rigorous computations than the Poincar\'e exponent 
itself. In Section \ref{tree_pressure_section} we prove that $\delta_\mathcal F \leqslant t_\mathcal F$. Afterwards, in Section \ref{Partitions_section} we discuss application of the latter result to the real quadratic family. Finally, in Section \ref{sec:the_computer_assisted_proof} we provide details on computer assisted proofs of Theorems \ref{main_theorem_Feig}, \ref{main_theorem_c} and \ref{main_theorem_smooth}. 

Assume, $\mathcal F$ is a partitioned holomorphic dynamical system, consisting of the maps~(\ref{hol_sys_maps_eq}). For each index $j=1,\dots,m$, let $d_j$ be the topological degree of the (branched) covering map $f_j$. Now, for any $t\ge 0$, we introduce the so called, McMullen's matrix $M(t) = M(\mathcal F,t)$ whose $(i,j)$-th element $m_{ij}$ is defined as
\begin{equation*}
m_{ij} = m_{\mathcal F,t}(i,j) = \begin{cases}
d_i\left(\sup_{z\in U_i \cap f_i^{-1}(U_j)} |f_i'(z)| \right)^{-t} & \text{if } U_i \cap f_i^{-1}(U_j) \neq\varnothing\\
0, & \text{otherwise},
\end{cases}
\end{equation*}
where the supremum above is taken over all $z\in U_i$, such that $f_i(z)\in U_j$, and the derivatives are computed in the spherical metric (see also Remark~\ref{metric_remark}). 

For a square matrix $A$, let $\varrho(A)$ denote the spectral radius of $A$. That is, $\varrho(A)$ is the maximum of the absolute values of the eigenvalues of $A$. 
For a partitioned holomorphic dynamical system $\mathcal F$, we define
\begin{equation}\label{delta_F_eq}
\delta_{\mathcal F} = \inf\{t\ge 0\colon \varrho(M(\mathcal F,t))= 1\}.	
\end{equation}

Finally, we can state the lower bound on the Poincar\'e exponent 
$t_\mathcal F$:

\begin{theorem}\label{main_inequality_theorem}
	For any partitioned holomorphic dynamical system $\mathcal F$, satisfying the assumptions from Remark~\ref{F_assumption_remark}, the number $\delta_{\mathcal F}$ is well defined, and $\delta_{\mathcal F} \le t_\mathcal F$.
\end{theorem}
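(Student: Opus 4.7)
The plan is to relate the matrix iterates $M(t)^k$ to the dynamical sums $\Theta^k(z,t,\mathcal F)$ via the Markov structure, then invoke Perron--Frobenius theory to extract the growth rate $\varrho(M(t))$, and finally combine continuity of the spectral radius with Lemma~\ref{pressure_at_2_lemma} to place $\delta_{\mathcal F}$ below $t_{\mathcal F}$. To unwind $\Theta^k(z,t,\mathcal F)$ for $z\in U_j\setminus PC(\mathcal F)$, I would track itineraries: each preimage $w$ of $z$ under $\mathcal F^{\circ k}$ determines a unique sequence $(i_0,\ldots,i_{k-1})$ with $\mathcal F^{\circ\ell}(w)\in U_{i_\ell}$, setting $i_k:=j$. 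The Markov property forces $U_{i_{\ell+1}}\subset f_{i_\ell}(U_{i_\ell})$ whenever the itinerary is realizable, so the restriction $f_{i_\ell}\colon U_{i_\ell}\cap f_{i_\ell}^{-1}(U_{i_{\ell+1}})\to U_{i_{\ell+1}}$ is a proper branched covering of the full degree $d_{i_\ell}$; since $z\notin PC(\mathcal F)$ no critical values appear along the chain, so exactly $\prod_\ell d_{i_\ell}$ preimages carry a fixed realizable itinerary. The chain rule and the definition of $m_{ij}(t)$ give $|(\mathcal F^{\circ k})'(w)|^{-t}\geq \prod_\ell (\sup|f_{i_\ell}'|)^{-t}$ for each such $w$, and summing over itineraries produces the key estimate
\begin{equation*}
\Theta^k(z,t,\mathcal F)\ \geq\ \sum_{i_0,\ldots,i_{k-1}}\prod_{\ell=0}^{k-1} m_{i_\ell,i_{\ell+1}}(t)\ =\ \sum_{i=1}^m \bigl(M(t)^k\bigr)_{i,j}.
\end{equation*}

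Next I would extract exponential growth via Perron--Frobenius. The transitivity axiom translates into strong connectivity of the directed graph of $M(t)$: starting at any index $i$, the guaranteed composition $f_{i_k}\circ\cdots\circ f_{i_1}\circ f_i$ that surjects onto $\bigcup_\ell U_\ell$ produces a chain of positive entries $m_{i,i_1}(t),\ldots$ ending with $m_{i_k,\ell}(t)>0$ for every $\ell$. Hence $M(t)$ is an irreducible nonnegative matrix for each $t\geq 0$, and admits a strictly positive left Perron--Frobenius eigenvector $v(t)$ with $v(t)^T M(t)^k=\varrho(M(t))^k v(t)^T$. Rearranging yields $\sum_i (M(t)^k)_{i,j}\geq C(t)\,\varrho(M(t))^k$ for some $C(t)>0$ independent of $k$; combining this with the itinerary estimate and the definition of $P(z,t,\mathcal F)$ gives $P(z,t,\mathcal F)\geq \log\varrho(M(t))$ for every $z\in\mathcal U\setminus PC(\mathcal F)$, and consequently $P(t,\mathcal F)\geq\log\varrho(M(t))$.

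Finally, the spectral radius $\varrho(M(t))$ depends continuously on $t$, and by the previous step together with Lemma~\ref{pressure_at_2_lemma} we have $\log\varrho(M(2))\leq P(2,\mathcal F)\leq 0$, so $\varrho(M(2))\leq 1$. At $t=0$ the cycle-product characterization of the spectral radius of an irreducible nonnegative matrix with integer entries $d_i$ yields $\varrho(M(0))\geq 1$, so the intermediate value theorem produces $t^*\in[0,2]$ with $\varrho(M(t^*))=1$, whence $\delta_{\mathcal F}$ is well defined and finite. For $t<\delta_{\mathcal F}$, continuity combined with the infimum definition forces $\varrho(M(t))>1$, hence $P(t,\mathcal F)>0$, which excludes $t$ from the set $\{s>0\colon P(s,\mathcal F)\leq 0\}$, and therefore $t_{\mathcal F}\geq\delta_{\mathcal F}$. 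The main technical obstacle I anticipate is ensuring that transitivity in Definition~\ref{hol_sys_def} really delivers irreducibility of $M(t)$ rather than a weaker recurrence condition, and controlling the Perron constant $C(t)$ uniformly enough that the $\limsup$ defining $P(z,t,\mathcal F)$ absorbs it cleanly; the degenerate case in which all $d_i=1$ is a minor side issue, since there $\varrho(M(0))=1$ and the inequality collapses to the trivial bound $\delta_{\mathcal F}=0\leq t_{\mathcal F}$.
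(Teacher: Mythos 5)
Your proposal is correct and follows essentially the same route as the paper: translate the Markov--itinerary structure into the bound $\Theta^k(z,t,\mathcal F)\geq \mathbf 1^T M(t)^k\mathbf e_j$, extract the rate $\log\varrho(M(t))$ from Perron--Frobenius theory, and combine with Lemma~\ref{pressure_at_2_lemma} and continuity of $t\mapsto\varrho(M(t))$. The only cosmetic difference is that the paper introduces the intermediate quantity $\theta^k$ (satisfying $\theta^k\leq\Theta^k$) and proves the \emph{exact} identity $\theta^k=\mathbf 1^T M^k\mathbf e_j$ and $p=\log\varrho$ using primitivity (Remark~\ref{primitive_remark}), whereas you work with the one-sided inequality $\Theta^k\geq\sum_i(M^k)_{ij}$ and invoke only irreducibility and the positive left Perron eigenvector, which suffices for the lower bound you need.
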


We finish this subsection with several practical remarks.

\begin{remark}\label{primitive_remark}
First, we note that any McMullen's matrix $M$ always has nonnegative entries. Furthermore, due to the transitivity condition in Definition~\ref{hol_sys_def}, any McMullen's matrix $M$ is primitive. That is, there exists $k\in\N$, such that the $k$-th power, $M^k$ has only positive entries. The spectral radius of a primitive matrix can be rigorously estimated both from above and from below by a version of the Perron-Frobenius Theorem, obtained by Collatz in~\cite{Co42} (see Theorem~\ref{thm:collatz}).
\end{remark}

\begin{remark}
The following general fact turns out to be helpful in obtaining tight rigorous bounds on $\delta_{\mathcal F}$:
\begin{lemma}\label{concavity_lemma}
	Let $M=\{m_{i,j}\}_{1\leqslant i,j\leqslant n}$ be a $n\times n$ primitive matrix and $M(\delta)=\{m_{i,j}^\delta\}_{1\leqslant i,j\leqslant n}$, where $\delta \in\mathbb R$. Then the spectral radius $\varrho(M(\delta))$ is a concave up function on $\mathbb R$.
\end{lemma}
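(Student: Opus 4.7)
The plan is to establish log-convexity of $\varrho(M(\delta))$, from which convexity (i.e.\ ``concave up'') follows because $\exp$ is convex and increasing. With the natural convention $0^\delta:=0$, the zero pattern of $M(\delta)$ coincides with that of $M$ for every $\delta\in\mathbb{R}$, so $M(\delta)$ remains primitive and the Perron-Frobenius machinery applies uniformly in $\delta$.

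I would start from the identity
$$
\varrho(M(\delta)) \;=\; \lim_{k\to\infty}\bigl(\operatorname{trace}(M(\delta)^k)\bigr)^{1/k},
$$
which holds for any primitive nonnegative matrix (since $\operatorname{trace}(M(\delta)^k)\sim C(\delta)\cdot \varrho(M(\delta))^k$ as $k\to\infty$). Expanding the trace as a sum over closed walks,
$$
\operatorname{trace}(M(\delta)^k) \;=\; \sum_{(i_1,\dots,i_k)}\bigl(m_{i_1 i_2}m_{i_2 i_3}\cdots m_{i_k i_1}\bigr)^{\delta},
$$
exhibits each summand in the form $c^\delta$ with $c\ge 0$, so each summand has logarithm linear in $\delta$ and is in particular log-convex.

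The key ingredient is that a sum of log-convex functions is log-convex, which is a direct consequence of H\"older's inequality: for positive log-convex $f_i$ and $\lambda\in[0,1]$,
$$
\sum_i f_i\bigl(\lambda\delta_1+(1-\lambda)\delta_0\bigr) \;\le\; \sum_i f_i(\delta_1)^\lambda f_i(\delta_0)^{1-\lambda} \;\le\; \Bigl(\sum_i f_i(\delta_1)\Bigr)^{\!\lambda}\Bigl(\sum_i f_i(\delta_0)\Bigr)^{\!1-\lambda}.
$$
Hence $\operatorname{trace}(M(\delta)^k)$ is log-convex in $\delta$ for each $k$; taking the $k$-th root scales the logarithm by $1/k$ and so preserves log-convexity; and pointwise limits of log-convex functions are log-convex. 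This gives log-convexity of $\varrho(M(\delta))$, from which convexity follows via the AM-GM chain $f(\lambda\delta_1+(1-\lambda)\delta_0)\le f(\delta_1)^\lambda f(\delta_0)^{1-\lambda}\le \lambda f(\delta_1)+(1-\lambda)f(\delta_0)$. The only nontrivial step in this plan is the H\"older estimate; everything else is mechanical once primitivity is used to legitimize the trace formula.
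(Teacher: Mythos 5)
Your proof is correct, but it takes a genuinely different route from the paper's. The paper works with the $(1,1)$ entry $m_k(\delta)$ of $(M(\delta))^k$ rather than the trace, writes it as $\sum_j c_j^\delta$, and proves convexity of $(m_k(\delta))^{1/k}$ by explicitly computing the second derivative and invoking the Cauchy--Schwarz inequality; passing to the limit $k\to\infty$ then gives convexity of $\varrho(M(\delta))$. You instead establish the stronger property of \emph{log}-convexity: each summand $(m_{i_1i_2}\cdots m_{i_ki_1})^\delta$ in $\operatorname{trace}(M(\delta)^k)$ has affine logarithm, a H\"older estimate shows that sums of log-convex functions are log-convex, the $k$-th root rescales the logarithm by $1/k$, and pointwise limits preserve log-convexity. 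This is in effect Kingman's classical theorem on the log-convexity of the Perron root of a matrix whose entries are log-convex in a parameter. What your approach buys is a differentiation-free argument and a stronger conclusion (log-convexity rather than mere convexity); what the paper's approach buys is self-containedness, replacing H\"older with the more elementary Cauchy--Schwarz. Two small points to make your write-up airtight: justify the trace formula $\varrho=\lim_k (\operatorname{trace} M(\delta)^k)^{1/k}$ for primitive matrices (for small $k$ the trace can vanish, but for $k$ large all entries of $M(\delta)^k$ are positive, so one takes the limit over such $k$); and when taking the pointwise limit of log-convex functions, note that the limit $\varrho(M(\delta))$ is finite and strictly positive for every $\delta$, which is what guarantees the limit of the convex functions $\frac{1}{k}\log\operatorname{trace}(M(\delta)^k)$ is itself convex.
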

\noindent The proof of Lemma~\ref{concavity_lemma} is provided in the Appendix.

In particular, for McMullen's matrices Lemma~\ref{concavity_lemma} implies that the function $\delta\mapsto \varrho(M(\mathcal F,\delta))$ is concave up on $(0,\infty)$. Hence, if we know that for some initial $\delta_1< \delta_2$, we have $\varrho(M(\mathcal F,\delta_1))>1> \varrho(M(\mathcal F,\delta_2))$, then rigorous tight bounds on $\delta_{\mathcal F}$ can be obtained by a standard bisection search.
\end{remark}

\begin{remark}
Finally, the lower bound $\delta_{\mathcal F}$ on $t_\mathcal F$ can be improved by considering a refinement of~$\mathcal F$.
\begin{definition}
	Let $\mathcal F$ be a partitioned holomorphic dynamical system, consisting of the maps~(\ref{hol_sys_maps_eq}). Another partitioned holomorphic dynamical system $\hat{\mathcal F}\colon\hat{\mathcal U}\to\hat{\mathcal F}(\hat{\mathcal U})$, consisting of the maps
	\begin{equation*} 
	\hat f_j\colon \hat U_j\to \hat\C, \qquad\text{where } j=1,\dots,\hat m,
	\end{equation*}
	is a \emph{refinement} of $\mathcal F$, if each tile $\hat U_j$ is contained in some tile $U_i$, the corresponding map $\hat f_j$ is the restriction of $f_i$ to $\hat U_j$, and for each point $z\in\hat{\mathcal F}(\hat{\mathcal U})$, the sets $\hat{\mathcal F}^{-1}(z)$ and $\mathcal F^{-1}(z)$ coincide.	
\end{definition}

It is obvious from this definition that if $\hat{\mathcal F}$ is a refinement of $\mathcal F$, then $t_\mathcal F = t_{\hat{\mathcal F}}$. On the other hand, we show in Remark~\ref{Refinement_delta_remark} that $\delta_{\mathcal F}\le \delta_{\hat{\mathcal F}}$. Hence, one can hope to get a better lower bound on the Poincar\'e exponent 
$t_\mathcal F$ by applying Theorem~\ref{main_inequality_theorem} to a refinement of $\mathcal F$, rather than to the system $\mathcal F$ itself. We implement this approach for real quadratic polynomials in Section~\ref{sec:the_computer_assisted_proof}.
\end{remark}

\section{Tree pressure over pseudo-orbits}\label{tree_pressure_section}

In this section we give a proof of Theorem~\ref{main_inequality_theorem}.

Assume, $k\in\N$ and $z\in \mathcal U$ are such that $z_i=\mathcal F^i(z)$ is defined for all $i=0,1,\dots,k$ and $z_k\in\mathcal U$. Let $j_0,j_1,\dots,j_k\in [1,m]$ be the sequence of indices, such that for each $i=0,\dots,k$, we have $z_i\in U_{j_i}$. We will call this sequence, \textit{the itinerary of $z$ of length $k+1$}. Then for each $i=0,\dots, k-1$, the inclusion $z_i\in U_{j_i}\cap \mathcal F^{-1}(U_{j_{i+1}})$ holds, and one can define the expression
\begin{equation}\label{Q_eq}
Q(z, k):= \prod_{i=0}^{k-1} \left(\sup_{y\in U_{j_i} \cap f_{j_i}^{-1}(U_{j_{i+1}})} |f_i'(y)| \right).
\end{equation}

It follows immediately from the Chain Rule that the following inequality holds:

\begin{equation}\label{F_Q_ineq}
|(\mathcal F^{\circ k})'(z)| \le Q(z,k).
\end{equation}

For every $z\in \mathcal U\setminus PC(\mathcal F)$ and $t>0$, consider the expression
$$
\theta^k(z,t,\mathcal F) = \sum_{w\in\mathcal U\colon z=\mathcal F^{\circ k}(w)} (Q(w,k))^{-t},
$$
where the summation is taken over all $w\in\mathcal U$, such that $\mathcal F^{\circ k}(w)=z$.
It follows immediately from~(\ref{F_Q_ineq}) that the inequality
\begin{equation}\label{Theta_ineq}
\theta^k(z,t,\mathcal F) \le \Theta^k(z,t,\mathcal F)
\end{equation}
holds for every $z\in \mathcal U\setminus PC(\mathcal F)$, $k\ge 1$ and $t>0$.
One can also observe that the expressions $\theta^k(z,t,\mathcal F)$ are independent from $z$ within the same tile $U_j$. Indeed, according to the Markov property of partitioned holomorphic dynamical systems, if $z,\tilde z\in U_j\setminus PC(\mathcal F)$, then for all $k\in\N$ there is a bijection between the finite sets $\mathcal F^{-k}(z)$ and $\mathcal F^{-k}(\tilde z)$, where the bijection is taking each point $w$ of $\mathcal F^{-k}(z)$ into a point $\tilde w$ of $\mathcal F^{-k}(\tilde z)$ with the same itinerary of length $k+1$. The latter implies that $Q(w, k) = Q(\tilde w, k)$, and the claim follows.

For any $z\in\mathcal U\setminus PC(\mathcal F)$ and $t>0$, consider the following version of the pressure function:
$$
p(z,t,\mathcal F) = \limsup_{k\to\infty}\frac{1}{k}\log \theta^k(z,t,\mathcal F).
$$
In particular,~(\ref{Theta_ineq}) implies that
\begin{equation}\label{pressure_ineq}
p(z,t,\mathcal F) \le P(z,t,\mathcal F).
\end{equation}
The next lemma states the relation between the values of $\theta^k(z,t,\mathcal F)$, $p(z,t,\mathcal F)$ and McMullen's matrices. For each index $j=1,\dots,m$, let $\mathbf e_j\in\R^m$ be the column vector whose $j$-th coordinate is $1$ and all other coordinates are $0$. Let also $\mathbf 1\in\R^m$ be the column vector, whose coordinates are all equal to $1$.

\begin{lemma}\label{Mktheta_lemma}
	Let $t>0$ be a real number and let $M=M(\mathcal F,t)$ be the corresponding McMullen's matrix. Then, for any pair of integers $k\in\N$, $j\in [1,m]$ and any $z\in U_j\setminus PC(\mathcal F)$, the following holds:
	$$
	\theta^k(z,t,\mathcal F) = \mathbf 1^T M^k \mathbf e_j.
	$$
	As a consequence, we have
	$$
	p(z,t,\mathcal F) = \log\varrho(M(\mathcal F,t)).
	$$
	In particular, $p(z,t,\mathcal F)$ is independent of $z\in\mathcal U\setminus PC(\mathcal F)$.
\end{lemma}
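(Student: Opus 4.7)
The plan is to reorganize the sum defining $\theta^k(z,t,\mathcal F)$ according to the itinerary $(j_0,j_1,\dots,j_{k-1},j_k)$ of each preimage $w\in\mathcal F^{-k}(z)$; here $j_k=j$ is fixed because $z\in U_j$. The key point is that both the weight $Q(w,k)^{-t}$ and the number of preimages with a prescribed itinerary depend only on the itinerary itself, with the numerical factors matching the entries of $M$ exactly.

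First, by the very definition~(\ref{Q_eq}), the quantity $Q(w,k)^{-t}$ depends only on the itinerary of $w$ and equals $\prod_{i=0}^{k-1}\bigl(\sup_{y\in U_{j_i}\cap f_{j_i}^{-1}(U_{j_{i+1}})}|f_{j_i}'(y)|\bigr)^{-t}$. Next, I count the preimages with a fixed itinerary $(j_0,\dots,j_{k-1},j)$. The Markov property in Definition~\ref{hol_sys_def} dichotomizes: either $U_{j_{i+1}}\subset f_{j_i}(U_{j_i})$, or $U_{j_i}\cap f_{j_i}^{-1}(U_{j_{i+1}})=\varnothing$. In the latter case the itinerary is unrealizable, so the count is $0$ and correspondingly $m_{j_i,j_{i+1}}=0$. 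In the former case, since $f_{j_i}\colon U_{j_i}\to f_{j_i}(U_{j_i})$ is a proper branched covering of degree $d_{j_i}$, every point of $U_{j_{i+1}}$ that is not a critical value of $f_{j_i}$ has exactly $d_{j_i}$ preimages in $U_{j_i}$. Since $PC(\mathcal F)$ is closed and forward invariant on $\mathcal U$, and since $z\notin PC(\mathcal F)$, an easy backward induction along the orbit shows that no intermediate preimage ever lies in $PC(\mathcal F)\supset C(\mathcal F)$. Hence at every step the count $d_{j_i}$ is exact, and the total number of preimages with this itinerary is $\prod_{i=0}^{k-1}d_{j_i}$.

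Multiplying the count by the common weight, and observing that the bracketed product $d_{j_i}\bigl(\sup_{y\in U_{j_i}\cap f_{j_i}^{-1}(U_{j_{i+1}})}|f_{j_i}'(y)|\bigr)^{-t}$ coincides with $m_{j_i,j_{i+1}}$ in both the degenerate and non-degenerate cases, we obtain
\begin{equation*}
\theta^k(z,t,\mathcal F)=\sum_{j_0,\dots,j_{k-1}=1}^{m}\prod_{i=0}^{k-1}m_{j_i,j_{i+1}}=\sum_{j_0=1}^{m}(M^k)_{j_0,j}=\mathbf 1^T M^k\mathbf e_j,
\end{equation*}
which is the first claim. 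For the second, I invoke Remark~\ref{primitive_remark}: $M=M(\mathcal F,t)$ is nonnegative and primitive, so by the Perron-Frobenius theorem its spectral radius $\varrho(M)$ is a simple positive eigenvalue with strictly positive left and right Perron eigenvectors, and $\varrho(M)^{-k}M^k$ converges to the associated rank-one projector with strictly positive entries. Hence $\varrho(M)^{-k}\mathbf 1^T M^k\mathbf e_j$ tends to a strictly positive limit, and the ordinary $\lim$ (not just $\limsup$) of $\tfrac{1}{k}\log\theta^k(z,t,\mathcal F)$ exists and equals $\log\varrho(M(\mathcal F,t))$, yielding both the stated formula for $p(z,t,\mathcal F)$ and its independence of $z$.

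The only subtle step is the preimage count: one must rule out the possibility that the backward orbit passes through a critical point, and it is precisely the hypothesis $z\notin PC(\mathcal F)$ together with the forward-invariance of $PC(\mathcal F)$ under $\mathcal F$ that makes the accounting with McMullen's matrix exact rather than merely an upper bound.
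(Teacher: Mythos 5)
Your proof is correct and takes essentially the same approach as the paper: you unroll the paper's induction on $k$ into an explicit itinerary decomposition, and both arguments then invoke primitivity of $M$ together with Perron--Frobenius to pass to the asymptotics. The one thing you spell out that the paper leaves implicit is the justification that the preimage count is exactly $\prod_{i=0}^{k-1}d_{j_i}$ (via forward invariance of $PC(\mathcal F)$ and the hypothesis $z\notin PC(\mathcal F)$), which is a worthwhile clarification.
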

\begin{proof}
	It follows immediately by induction on $k$ that the $(i,j)$-th entry $M^k_{ij}$ of the matrix $M^k$ is equal to
	$$
	M^k_{ij} = \sum_{w\in U_i\colon z=\mathcal F^{\circ k}(w)} (Q(w,k))^{-t},
	$$
	where the summation is taken over all $w\in U_i$, such that $\mathcal F^{\circ k}(w)=z$. (If there are no such points $w$, then the sum is considered to be equal to zero.) Then $\theta^k(z,t,\mathcal F)$ is equal to $\mathbf 1^T M^k \mathbf e_j$, which is the sum of all elements in the $j$-th column of $M^k$.	
	
	Since the matrix $M = M(\mathcal F,t)$ is primitive (see Remark~\ref{primitive_remark}), the second part of the lemma follows from the version of the Perron-Frobenius Theorem, which states that $M^k/(\varrho(M))^k$ converges to a matrix, whose columns are positive multiples of the leading eigenvector.
\end{proof}

\begin{proof}[Proof of Theorem~\ref{main_inequality_theorem}]
	From Lemma~\ref{Mktheta_lemma} and~(\ref{pressure_ineq}) it follows that
	$$
	\log \varrho(M(\mathcal F,t)) = p(z,t,\mathcal F)\le P(z,t,\mathcal F),
	$$
	for any $z\in\mathcal U\setminus PC(\mathcal F)$ and $t>0$. Now, passing to the infimum over $z\in\mathcal U\setminus PC(\mathcal F)$, we obtain
	
	\begin{equation}\label{pressure_spec_rad_ineq}
	\log \varrho(M(\mathcal F,t)) \le P(t,\mathcal F).
	\end{equation}
	Observe that $\log \varrho(M(\mathcal F,0)) = p(z,0,\mathcal F) \ge 0$ and $\log \varrho(M(\mathcal F,2)) \le P(2,\mathcal F)\le 0$ (see Lemma~\ref{pressure_at_2_lemma}). Since the function $t\mapsto \log \varrho(M(\mathcal F,t))$ is continuous, the above inequalities imply that this function attains a zero on the interval $[0,2]$, hence, the set $\{t\ge 0\colon \varrho(M(\mathcal F,t))= 1\}$ from~(\ref{delta_F_eq}) is nonempty and the number $\delta_{\mathcal F}$ is well defined.
	The inequality $\delta_{\mathcal F}\le t_{\mathcal F}$ follows from~(\ref{pressure_spec_rad_ineq}).
\end{proof}

We conclude this section with several observations.
\begin{remark}\label{Refinement_delta_remark}
	If $\hat{\mathcal F}\colon\hat{\mathcal U}\to\hat{\mathcal F}(\hat{\mathcal U})$ is a refinement of $\mathcal F$, then for any $z\in\hat{\mathcal U}\setminus PC(\hat{\mathcal F})$ and $t>0$, we have
	$$
	p(z,t,\mathcal F)\le p(z,t,\hat{\mathcal F}),
	$$
	since the suprema in~(\ref{Q_eq}) for a refinement $\hat{\mathcal F}$ are taken over smaller domains compared to the case of $\mathcal F$. Thus, $\varrho(M(\mathcal F,t)) \le \varrho(M(\hat{\mathcal F},t))$ and in particular,
	$$\delta_{\mathcal F}\le \delta_{\hat{\mathcal F}}.$$
\end{remark}

\begin{remark}\label{Convergence_delta_remark}
Let $\mathcal F_k$ be a sequence of partitioned holomorphic dynamical systems such that $\mathcal F_{k+1}$ is a refinement of $\mathcal F_k$ for every $k\in\mathbb N$. Then for any $z\in\hat{\mathcal U}\setminus PC(\hat{\mathcal F})$ and $t>0$ the sequence $p(z,t,\mathcal F_k)$ is increasing and therefore converges.
Assume that $\mathcal F_1$ (and hence, all other $\mathcal F_k$) is a restriction of a rational function $f$ and the topological degree of each $\mathcal F_k\colon \mathcal U_k\to\mathcal F_k(\mathcal U_k)$ coincides with $\deg f$. Assume that $\diam(\mathcal F_k)$, defined as the maximum of the diameters of all tiles of $\mathcal F_k$, converges to zero when $k\to\infty$.
In \cite{Prz22} F.~Przytycki showed that under the above conditions, $\lim p(z,t,\mathcal F_k)=P(t,f)$, where $P(t,f)$ is the geometric tree pressure of a rational function $f$. Taking into account that the sequence $p(z,t,\mathcal F_k)$ is increasing in $k$, we obtain that $\delta_{\mathcal F_k}\to t_f = \hypd(J(f))$, as $k\to\infty$. 
\end{remark}

\section{Dynamical partitions for real quadratic polynomials}\label{Partitions_section}

Starting from this section we restrict our consideration to the real quadratic polynomials $p_c(z) = z^2+c$, where $c\in [-2,2]$.

Let $D_r(z)\subset\C$ denote the open disc of radius $r\ge 0$ centered at $z\in\C$, and let $U_1, U_2\subset\C$ be the two connected components (half-disks) of the set $D_2(0)\setminus [-2,2]$. For any fixed $c\in [-2,2]$, consider the maps $f_1\colon U_1\to\C$ and $f_2\colon U_2\to\C$ defined as the restrictions of $p_c$ to the corresponding domains $U_1$ and $U_2$. It is easy to check that the maps $f_1$ and $f_2$ together form a partitioned holomorphic dynamical system $\mathcal F_c$ as in Definition~\ref{hol_sys_def}. (Note that the latter is not true for non-real complex parameters $c$ due to failure of the Markov property in Definition~\ref{hol_sys_def}.)

A sequence of refinements of $\mathcal F_c$ can be constructed in a standard way: a refinement $\mathcal F_c^{(k)}$ of level $k = 2, 3,\dots$ consists of $2^k$ maps, each of them being a restriction of the polynomial $p_c$ to a corresponding connected component of the set $p_c^{1-k}(U_1\cup U_2)$. We refer to these connected components as tiles of level $k$ and denote them by $P_j^{(k)}(c)$, where $j=1,2,\dots, 2^k$. The tiles are enumerated so that each tile $P_j^{(k)}(c)$ is squeezed between the external rays of angles $(j-1)/2^k$ and $j/2^k$. For each $k$ as above, the set of all tiles of level $k$ is denoted by $\mathcal P^{(k)}(c)$. We will often suppress the parameter $c$ in the notation, writing $P_j^{(k)}$ and $\mathcal P^{(k)}$ in place of $P_j^{(k)}(c)$ and $\mathcal P^{(k)}(c)$ respectively, provided that this does not cause any ambiguity.

It follows from Theorem~\ref{Bowen_formula_theorem} that for any $k$, the Poincar\'e exponents 
$t_{\mathcal F_c^{(k)}}$ coincides with the hyperbolic dimension of the Julia set $J_c$, hence, due to Theorem~\ref{main_inequality_theorem}, we have
$$
\delta_{\mathcal F_c^{(k)}}\le \hypd(J_c)\le \hd(J_c),
$$
for any $k\ge 2$. According to Remark~\ref{Refinement_delta_remark}, the difference $\hypd(J_c)-\delta_{\mathcal F_c^{(k)}}$ decreases monotonically as $k$ increases, so our strategy for obtaining lower bounds on $\hypd(J_c)$ consists of estimating $\delta_{\mathcal F_c^{(k)}}$ from below for sufficiently large values of $k$.

The McMullen matrices, corresponding to the refinements $\mathcal F_c^{(k)}$ will be denoted by
$$
M_c^{(k)}(t):= M(\mathcal F_c^{(k)},t)
$$
with the convention that $M_c^{(k)} = M_c^{(k)}(1)$. We will often suppress the indices and write $M^{(k)}(t)$ or $M(t)$ instead of $M_c^{(k)}(t)$, and $M^{(k)}$ or simply $M$ instead of $M_c^{(k)}$, when this does not cause any confusion.


We note that due to local connectivity of the Julia sets for real quadratic polynomials, it follows that $\diam(\mathcal F_c^{(k)})\to 0$ as $k\to\infty$. Hence, the result of F.~Przytycki~\cite{Prz22}, discussed in Remark~\ref{Convergence_delta_remark}, implies that
$$
\lim_{k\to\infty}\delta_{\mathcal F_c^{(k)}} = \hypd(J_c),
$$
for all $c\in[-2,2]$. Unfortunately, we don't know a good bound on the rate of convergence of $\delta_{\mathcal F_c^{(k)}}$, when the critical point is recurrent (e.g., when $p_c$ is an infinitely renormalizable quadratic polynomial).

	\section{The computer-assisted proof}
\label{sec:the_computer_assisted_proof}

In this section we will outline the general strategy employed for the computer-assisted part of the proof. We will also present the key steps of the proof, together with some details that are specific to the problem at hand. We begin by recalling the main results, summarizing our conclusions.

\begin{theorem}\label{thm:lower_bound_on_hd}
Define $c_\star = -1.4011551890$. Then for any parameter $c\in\R$ satisfying $|c - c_\star|\le 10^{-10}$, we have the lower bound
\begin{equation*}
1.49781 < \hypd(J_c).
\end{equation*}
\end{theorem}

We remark that the Feigenbaum parameter $c_{\feig}$ satisfies the bound
$|c_{\feig} - c_\star|\le 10^{-10}$, so the theorem applies to the Feigenbaum map $p_{\feig}(z) = z^2 + c_{\feig}$. Thus, Theorem~\ref{thm:lower_bound_on_hd} immediately implies Theorem~\ref{main_theorem_Feig}. That is, the Hausdorff and hyperbolic dimensions of the Julia set for the Feigenbaum map satisfy
$$
1.49781 < \hypd(J_{c_{\feig}}) = \hd(J_{c_{\feig}}).
$$
(The equality $\hypd(J_{c_{\feig}}) = \hd(J_{c_{\feig}})$ follows from~\cite{Avila_Lyubich_08} and~\cite{DS-20}.)

By using our computational machinery developed to prove Theorem~\ref{thm:lower_bound_on_hd}, we can study other parameters in the same manner. Interesting candidates are the periodic Feigenbaum parameters of which we explore a few below:

\begin{theorem}\label{thm:lower_bound_on_hd_c}
Given a parameter $c_\star$ from Table~\ref{lo_bounds_table}, 
for any parameter $c\in\R$ satisfying $|c - c_\star|\le 10^{-10}$, the hyperbolic dimension of its Julia set $\hypd(J_c)$ satisfies the corresponding tabulated lower bound.
\end{theorem}

\begin{table}[h]
\begin{tabular}{ll}
\hspace*{2mm} parameter $c_\star$ & \hspace*{5mm} lower bound  \\
\hline
$-1.9963832458$ & $1.03142410217842673$  \\
$-1.9855395300$ & $1.07439037960430284$  \\
$-1.9668432010$ & $1.08899058048555264$  \\
$-1.9427043547$ & $1.15803646135900751$  \\
$-1.9075041928$ & $1.14436916492704910$  \\
$-1.8622240226$ & $1.20002817922795790$  \\
$-1.7864402555$ & $1.29622703845671050$  \\
$-1.7812168061$ & $1.32518996605940753$  \\
$-1.6319266544$ & $1.33306905791978458$  \\
$-1.4831818301$ & $1.41584133336146056$
\end{tabular}
\vspace{5mm}
\caption{Lower bounds for the hyperbolic dimension of the Julia sets}\label{lo_bounds_table}
\end{table}

This immediately implies Theorem~\ref{main_theorem_c}, where we also have added some information regarding the dynamics of the critical orbits. The computational scheme for rigorous numerical approximations of the parameters $c_\star$ from the table is discussed in the Appendix.

Feeding even wider sets of parameters to our code, we can compute a uniform lower bound on the Hausdorff dimension of the associated Julia sets:
\begin{theorem}\label{thm:uniform_bound_on_hd}
For any (real) parabolic parameter $\hat c \in (c_\feig, -3/4)$ we have $\hypd(J_{\hat c}) > 4/3$.
\end{theorem}

Combining Theorem~\ref{thm:uniform_bound_on_hd} with a very recent result of Ludwik Jaksztas and Michel Zinsmeister (see \cite{JZ20}), we obtain the result of Theorem~\ref{main_theorem_smooth}, which says that for all (real) parameters $c\in (c_\feig, -3/4)$ the mapping $c\mapsto\hd(J_c)$ is continuously differentiable.

\subsection{Constructing the McMullen matrices}
\label{subsec:constructing_the_mcmullen_matrices}

A key step of our proof is to be able to compute the spectral radii of certain variants of matrices $M_c^{(k)} = M^{(k)}$ $(k=2,3,\dots)$, here called McMullen matrices.
The parameter $c$ is assumed to be in the interval $c\in[-2,2]$.
The positive integer $k$ corresponds to the encoding depth used when constructing the cover of the Julia set, see Section~\ref{Partitions_section}. From this point of view, $k$ is the length of the bit strings used to encode the $2^k$ tiles at level $k$. As a consequence, the matrix $M^{(k)}$ has dimension $2^k\times 2^k$.

Simply put, each non-zero element $m^{(k)}_{i,j}$ of $M^{(k)}$ relates to the maximal distance each tile of the covering has to the origin of the complex plane. To be more precise, given the collection of tiles $P_1^{(k)},\dots,P_{2^k}^{(k)}$, we have
\begin{equation}\label{eq:non_zero_matrix_elements}
m^{(k)}_{i,j} =
\begin{cases*}
  \left(\max\left\{|p'_c(z)|\colon z\in P_i^{(k)}\cap p_c^{-1}(P_j^{(k)})\right\}\right)^{-1} & if $P_i^{(k)}\cap p_c^{-1}(P_j^{(k)})\neq\emptyset $,  \\
   \phantom{-----} 0 & otherwise.
\end{cases*}
\end{equation}
Following Remark~\ref{metric_remark}, the derivatives $p'_c(z)$ in~(\ref{eq:non_zero_matrix_elements}) are measured in the Euclidean metric. Note that $|p'_c(z)| = 2|z|$ is independent of the parameter $c$. On a technical note, by the nesting properties of the tiles, it follows that we have $P_i^{(k)}\cap p_c^{-1}(P_j^{(k)}) = P_l^{(k+1)}$ for a certain index $l = l(i,j)$. Therefore, when computing $M^{(k)}$ we actually use tiles of the next level $k+1$.

Due to the underlying dynamics of real quadratic polynomials, a McMullen matrix has a four-fold symmetry as follows (we suppress $k$ for now):
\begin{equation}\label{eq:matrix_symmetry}
\sbox0{$\begin{matrix}1&2&3\\0&1&3\\0&0&3\end{matrix}$}
M = \left[
\begin{array}{c|c}
  \vphantom{\usebox{0}}\makebox[\wd0]{\large $G$}&\makebox[\wd0]{\large \rotatebox[origin=c]{180}{$G$}}\\
\hline
  \vphantom{\usebox{0}}\makebox[\wd0]{\large $G$}&\makebox[\wd0]{\rotatebox[origin=c]{180}{\large $G$}}
\end{array}
\right]
\end{equation}
Here the four submatrices all carry exactly the same information. Each of the two right-most matrices (labeled \rotatebox[origin=c]{180}{$G$}) is given by rotating the matrix $G$ 180 degrees. The matrix $G$ has size $2^{k-1}\times 2^{k-1}$; it is sparse and double banded. Out of its $2^{2(k-1)}$ elements, only $2^{k-1}$ are non-zero, which corresponds to one row's worth. In fact, using our encoding of tiles, only the elements of $G$ indexed by $(i, 2i-1)$ and $(i, 2i)$ for $i = 1,\dots,2^{k-2}$ carry any information; all other matrix elements are zero. For efficiency, the non-zero entries of $G$ can be stored in a single $2^{k-1}$-dimensional vector; the elements are computed using  (\ref{eq:non_zero_matrix_elements}). 

Given a McMullen matrix $M$, together with a positive number $t>0$, we define a new matrix $M(t)$ whose elements are given by
\begin{equation}
m_{i,j}(t) =
\begin{cases*}
  \left(m_{i,j}\right)^{t}   & if $m_{i,j}\neq 0$  \\
                     \phantom{-}0 & if $m_{i,j} = 0$.
\end{cases*}
\end{equation}
In other words, $M(t)$ is the matrix formed by raising each non-zero element of $M$ to the power of $t$. Thus $M(t)$ has exactly the same sparse, banded structure as $M$, see Figure~\ref{fig:matrix_structure_nz}.

\begin{figure}[h]
\begin{center}
\includegraphics[scale=0.41]{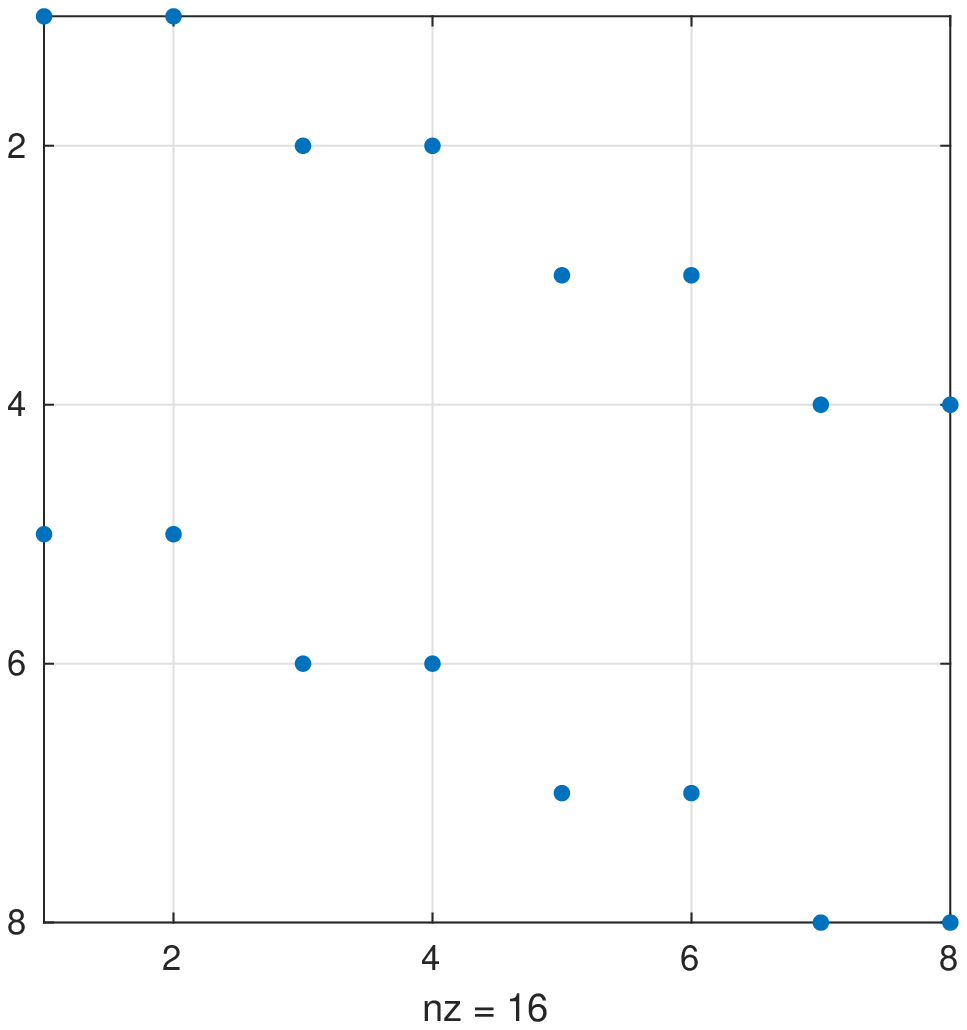}
\hspace*{-12mm}
\includegraphics[scale=0.41]{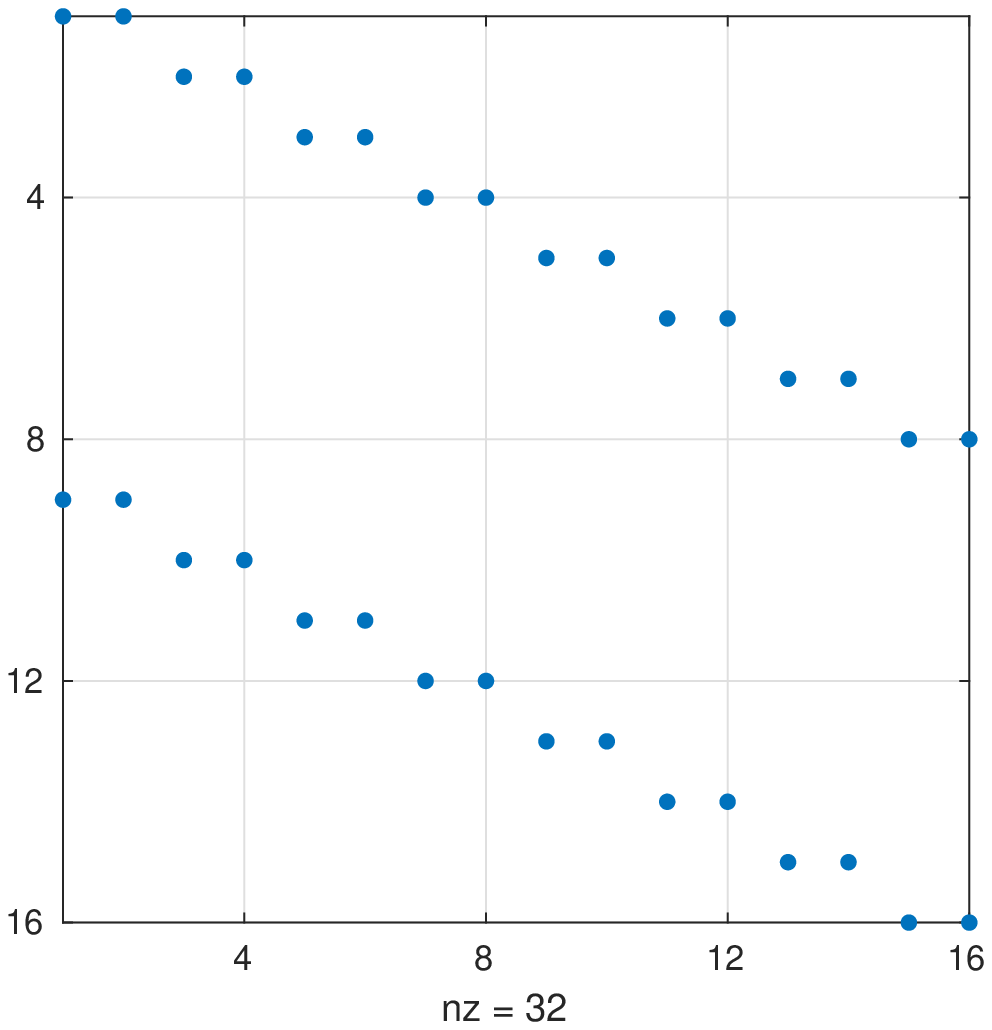}
\hspace*{-12mm}
\includegraphics[scale=0.41]{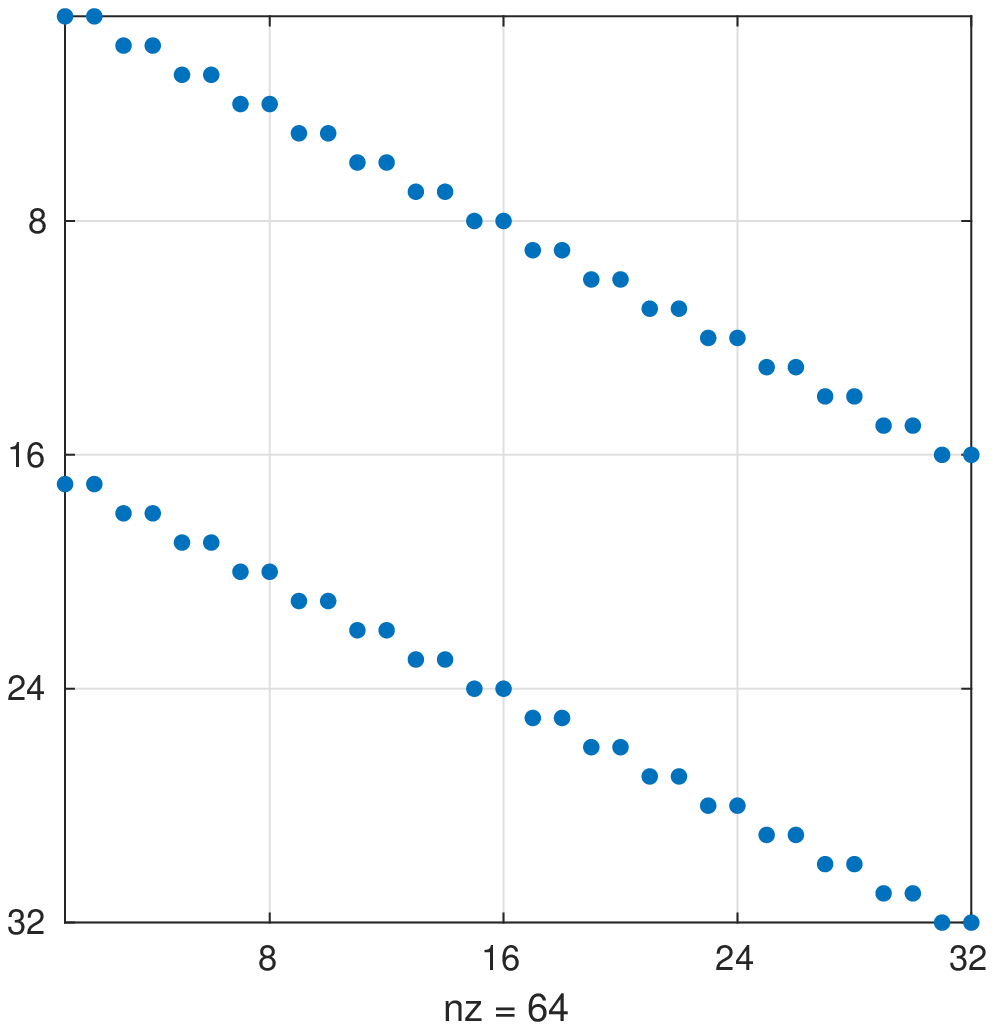}
\end{center}
\caption{The sparse, banded structure of the McMullen matrices $M^{(k)}$ with $2^{k+1}$ non-zero elements. (a) $k = 3$, (b) $k = 4$, and (c) $k = 5$.}
\label{fig:matrix_structure_nz}
\end{figure}

\subsection{Bounding the spectral radius}
\label{subsec:bounding_the_spectral_radius}

For a fixed parameter $c\in [-2,2]$ and a fixed encoding depth $k$ we now focus on the positive number
\begin{equation}\label{eq:definition_of_lo_delta}
\lo{\delta_k} = \inf\{t>0\colon \varrho(M^{(k)}(t)) = 1\}
\end{equation}
which, by Theorem~\ref{Bowen_formula_theorem} and Theorem~\ref{main_inequality_theorem}, we know is a lower bound for the Hausdorff (in fact, hyperbolic) dimension of the Julia set $J_c$. We will continue suppressing the encoding depth $k$, writing $M(\delta)$ in place of $M^{(k)}(\delta)$ and $\lo{\delta}$ in place of $\lo{\delta_k}$. 
Our aim is now to find good lower bounds for $\lo{\delta}$. We will use a very simple two-step strategy: First, we make an educated guess $\delta_\star<2$ for a lower bound for $\lo{\delta}$. Next, we prove that
\begin{equation}\label{want_to_prove_equation}
1< \varrho(M(\delta_\star)).
\end{equation}
Observe that according to Lemma~\ref{pressure_at_2_lemma} combined with~(\ref{pressure_ineq}) and Lemma~\ref{Mktheta_lemma}, it follows that $\varrho(M(2))\le 1$. Together with concavity of the function $t\mapsto \varrho(M^{(k)}(t))$ (see Lemma~\ref{concavity_lemma}) and inequality~(\ref{want_to_prove_equation}), this implies that indeed, $\delta_\star\le\lo{\delta}$, hence, $\delta_\star\le\hypd(J_c)\le \hd(J_c)$.

For this strategy to work, there are three challenges to overcome:
\begin{itemize}
\item[1.] We need some efficient heuristics for making a good (educated) guess for $\delta_\star$.
\item[2.] Given $t>0$, we need to rigorously enclose the matrix elements of $M(t)$.
\item[3.] Given $t>0$, we need to rigorously bound (from below) the spectral radius of $M(t)$.
\end{itemize}

For now, we will assume that we can overcome challenge 2. This will be explained in detail in Section~\ref{subsec:rigorous_tile_computations}.

In order to compute and bound the spectral radius, we will discuss some important classes of matrices.

\begin{definition}
A matrix $A$ is \emph{positive} if all its elements are positive; we then write $A > 0$. Analogously, a matrix $A$ is \emph{non-negative} if all its elements are non-negative: we then write $A \ge 0$.
\end{definition}

Using this notation we can write e.g. $A \ge B$, meaning that $A-B\ge 0$.

\begin{definition}
A square, non-negative matrix $A$ is \emph{primitive} if there exists a positive integer $n$ such that $A^n$ is positive.
\end{definition}
A related, but somewhat weaker, property is that of being \textit{irreducible}. All primitive matrices are irreducible. The latter class is important since the Perron-Frobenius theory for positive matrices generalizes to the class of irreducible matrices.

We point out that, by construction, all McMullen matrices $M$ and $M(t)$ are primitive (and thus irreducible).

By a classic result by Collatz \cite{Co42} we have the following:
\begin{theorem}\label{thm:collatz}
Let $A$ be a non-negative, irreducible, $n\times n$ matrix, and let $v^{(0)}$ be an arbitrary positive $n$-dimensional vector. Defining $v^{(j+1)} = [v^{(j+1)}_1\, \dots \,v^{(j+1)}_n]^T = Av^{(j)} = \dots = A^{j+1}v^{(0)}$, and setting
\begin{equation}
\lo{\lambda}^{(j)} = \min_{1\le i\le n}\left(\frac{v_i^{(j+1)}}{v_i^{(j)}}\right)
\qquad\textrm{ and }\qquad
\hi{\lambda}^{(j)} = \max_{1\le i\le n}\left(\frac{v_i^{(j+1)}}{v_i^{(j)}}\right)
\end{equation}
we have the following sequence of enclosures
\begin{equation}\label{eq:spectral_enclosures1}
\lo{\lambda}^{(0)}\le\lo{\lambda}^{(1)}\le\dots\le \varrho(A)\le\dots\le\hi{\lambda}^{(1)}\le\hi{\lambda}^{(0)}.
\end{equation}
\end{theorem}
This theorem is very useful: it \textit{encloses} the spectral radius of $A$, and therefore we can use any point $r \in [\lo{\lambda}^{(j)}, \hi{\lambda}^{(j)}]$ as an approximation of the spectral radius. Furthermore, we have the bound $|r - \varrho(A)| \le \hi{\lambda}^{(j)} - \lo{\lambda}^{(j)}$ on the maximal approximation error we are making by using $r$ in place of $\varrho(A)$. When the matrix $A$ is primitive, it is known that this error tends to zero as $j$ increases; $\lim_{j\to\infty} (\hi{\lambda}^{(j)} - \lo{\lambda}^{(j)}) = 0$.

We are now prepared to overcome challenge 1: finding a good candidate for $\delta_\star$ -- the lower bound for $\lo{\delta}$ appearing in (\ref{eq:definition_of_lo_delta}). We request that $\delta_\star$ satisfies two bounds:
\begin{equation}\label{eq:bounds_on_delta_star}
0< \varrho(M(\delta_\star)) - 1 \le \varepsilon,
\end{equation}
where $\varepsilon$ is a small positive number. The smaller we take $\varepsilon$, the better our bound becomes. The leftmost inequality of (\ref{eq:bounds_on_delta_star}), which is the same as inequality~(\ref{want_to_prove_equation}), ensures that $\delta_\star$ is a true lower bound (and not just an approximation of $\lo\delta$). Using Theorem~\ref{thm:collatz}, we can find a good $\delta_\star$ via a simple bisection scheme. 
A straight-forward computation indicates that $\varrho(M(2)) < 1 < \varrho(M(\tfrac{1}{10}))$. Based on this, we start the bisection search on the interval $\Delta_0 = [\tfrac{1}{10},2]$. After $j$ steps we have an interval $\Delta_j$ of width less than $2^{1-j}$, whose left endpoint is a valid lower bound $\delta_\star$ of $\lo{\delta}$. In our computations, we end the bisection scheme when we have reached $\varepsilon \le 10^{-10}$ in (\ref{eq:bounds_on_delta_star}).

\subsection{Set-valued computations}
\label{subsec:set-valued_computations}

For efficiency reasons, all computations carried out in the bisection scheme are performed using normal floating point arithmetic. As such they are not entirely reliable: rounding errors can accumulate, and we must therefore carefully verify that $\delta_\star$ indeed is a true lower bound of high accuracy. For the lower bounds reported in Theorem~\ref{thm:lower_bound_on_hd} and Theorem~\ref{thm:lower_bound_on_hd_c}, we use an encoding depth of 28, which means that $M$ has $2^{29} = 536~870~912$ non-zero elements. For Theorem~\ref{thm:uniform_bound_on_hd} we use a mix of depths 17 and 18.

In order to certify the computations described in Section~\ref{subsec:bounding_the_spectral_radius}, we have to resort to set-valued numerics. We will work with interval-valued matrices, and all operations will use interval arithmetic with outward directed rounding. An elementary overview of the theory of interval analysis is given in \cite{Moore1966,Tucker2011book}.

For now, we assume that we are given interval-valued McMullen matrices $\ivMM^{(k)}$, whose non-zero elements are intervals: $\ivM^{(k)}_{i,j} = [\lo{m}^{(k)}_{i,j}, \hi{m}^{(k)}_{i,j}]$. The width of each such interval reflects the uncertainty in the underlying tile construction; this is described at depth in  Section~\ref{subsec:rigorous_tile_computations}. Again, we will continue to suppress the encoding depth $k$ for clarity.

Given an interval McMullen matrix $\ivMM$ we want to find a good lower bound for the hyperbolic dimension of the Julia set $J_c$:
\begin{equation}\label{eq:definition_of_lo_delta_2}
\lo{\delta} = \inf\{t>0\colon \varrho(M(t))=1, \textrm{ for some } M(t)\in \ivMM(t)\}.
\end{equation}

To this end, we will use the following result (see \cite{Nu86}):
\begin{lemma}\label{lem:ordered_spectral_radii}
If $A$ and $B$ are irreducible matrices such that $0\le A \le B$, then $\varrho(A) \le \varrho(B)$.
\end{lemma}

Recall that, by construction, each McMullen matrix is primitive (and therefore irreducible). This property carries over to the interval-valued versions $\ivMM(t)$ too. Let $\lo{M}(t) = \min(\ivMM(t))$ denote the matrix whose elements are formed by taking the lower endpoint of each non-zero (interval) element of $\ivMM(t)$. Then, by Lemma~\ref{lem:ordered_spectral_radii}, it follows that
\begin{equation}
\varrho(\lo{M}(t)) = \min\left\{\varrho(A) \colon A \in \ivMM(t)\right\}.
\end{equation}
We now validate our (educated) guess $\delta_\star$ by computing $\varrho(\lo{M}(\delta_\star))$ using Theorem~\ref{thm:collatz}, but with all operations carried out in interval arithmetic. This will produce a rigorous enclosure of $\varrho(\lo{M}(\delta_\star))$ which can be used to validate the lower bound $\delta_\star$. More specifically, we check that $\varrho(\lo{M}(\delta_\star))>1$, which implies~(\ref{want_to_prove_equation}), which in turn, implies that $\delta_\star$ is a true lower bound on $\lo{\delta}$.

\subsection{Rigorous tile computations}
\label{subsec:rigorous_tile_computations}

Recall the notation from Section~\ref{Partitions_section}: let $D_r(z)$ denote the open disc of radius $r\ge 0$ centered at $z\in\C$. Note that $D_2(0)\setminus [-2,2]$ consists of two half-discs. Given a quadratic map $p_c(z) = z^2 + c$ with $c\in[-2,2]$ and an integer $k=2,3,\dots$, we will let $\calP^{(k)}$ denote the set of connected components of $p_c^{1-k}(D_2(0)\setminus [-2,2])$. The elements of $\calP^{(k)}$ are called \textit{tiles}; there are exactly $2^{k}$ of them at level $k$. When $c$ is real, the tiles have a four-fold symmetry: each quadrant is a reflection of the others. When $c$ is complex, the symmetry is only two-fold (see Figure~\ref{fig:level_3_real_c}), and the restriction of the map $p_c$ to each of the tiles no longer provides a partitioned holomorphic dynamical system as the Markov property fails.

\begin{figure}[h]
\begin{center}
\includegraphics[scale=0.6]{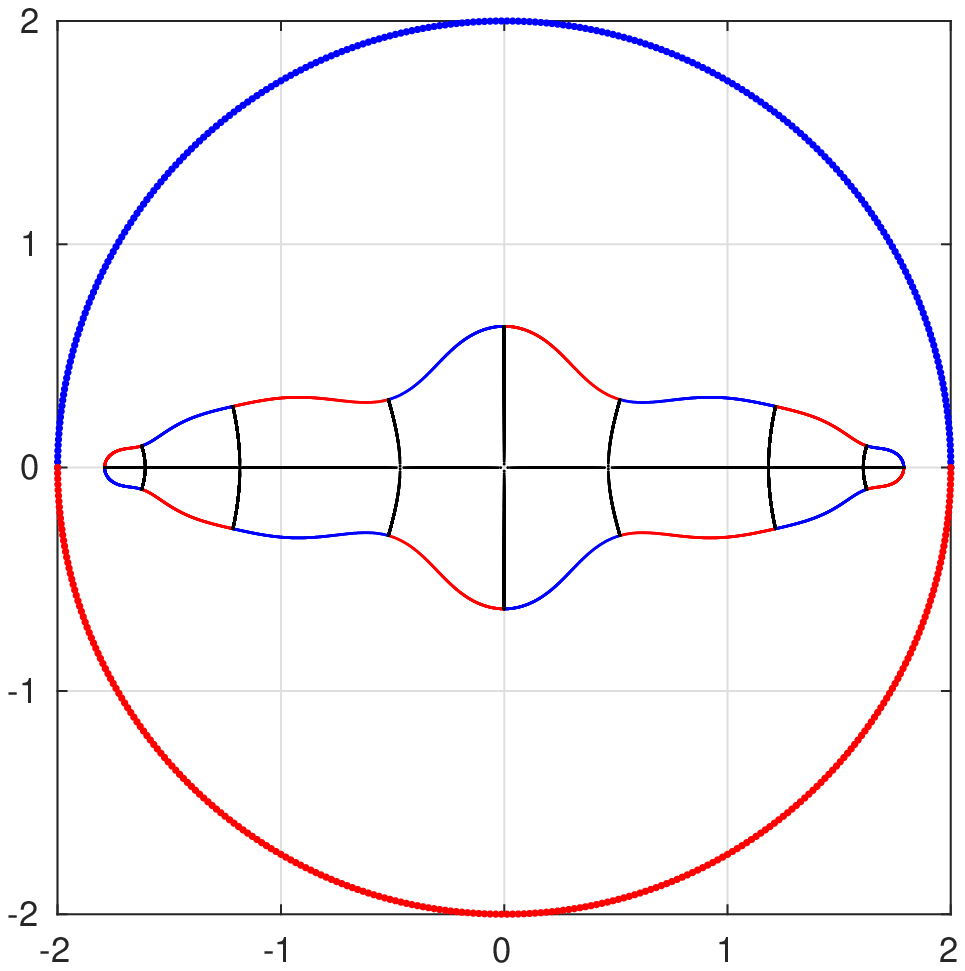}
\hspace*{-18mm}
\includegraphics[scale=0.6]{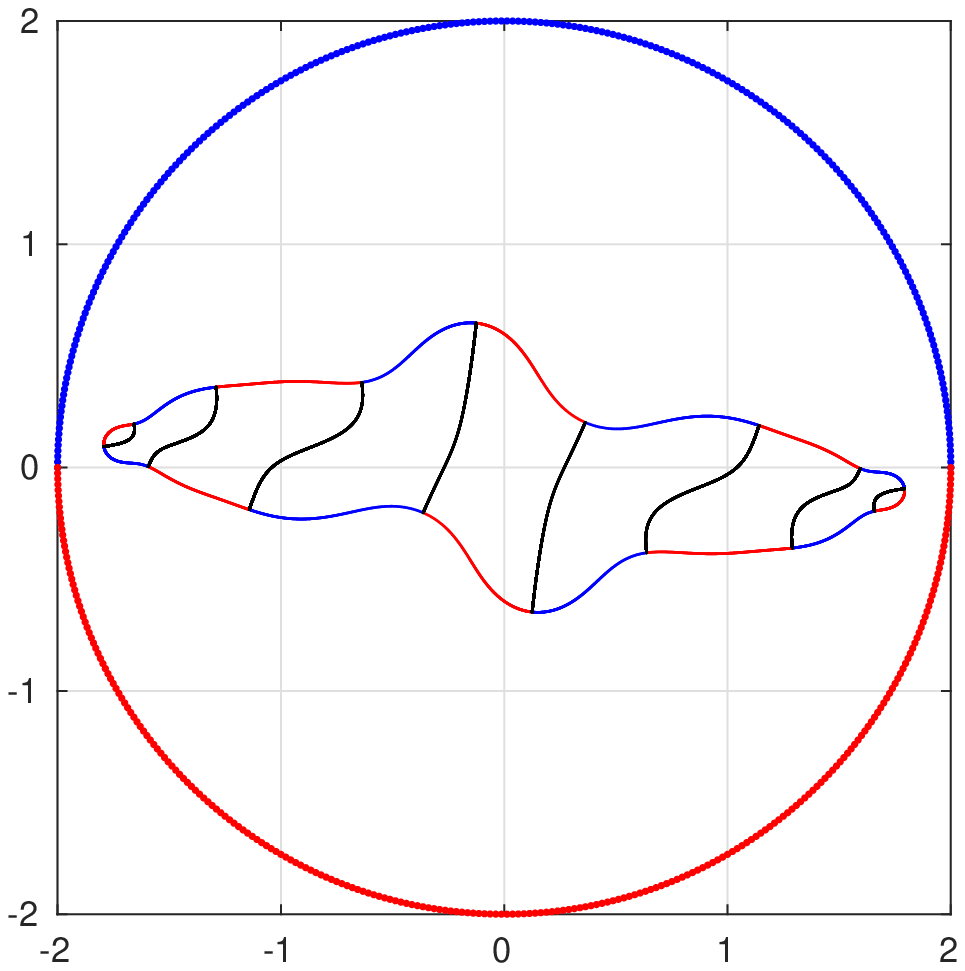}
\end{center}
\caption{All $16$ tiles of $\calP^{(4)}$ for two different quadratic maps. (a) For real parameters (here $c = c_\star$) there is a four-fold symmetry. (b) For complex parameters (here $c = c_\star + \tfrac{1}{4}i$) a two-fold symmetry remains.}
\label{fig:level_3_real_c}
\end{figure}

Suppressing the level $k$ for the moment, recall that, for each tile $P_i$, we want to compute the maximal distance $s_i = \max\{|z|\colon z\in P_i\}$. Since $p_c'(z) = 2z$, we have the relation $m_{i,j} = 1/(2s_i)$, whenever $P_j\subset p_c(P_i)$, see (\ref{eq:non_zero_matrix_elements}) of Section~\ref{subsec:constructing_the_mcmullen_matrices}. In order to get rigorous enclosures of the elements of the McMullen matrices, we must be able to enclose the distances $s_i$ for all tiles at a fixed level. We do this by covering the boundary of each tile with small discs, starting with the two initial half-discs $D_2(0)\setminus [-2,2]$. From these initial half-discs, we generate the entire collection of tiles at level $k$ is by repeatedly applying the inverse map $p^{-1}_c(w) = \pm\sqrt{w - c}$. This of course, requires an inverse map that is disc-valued.

\begin{figure}[h]
\begin{center}
\includegraphics[scale=0.6]{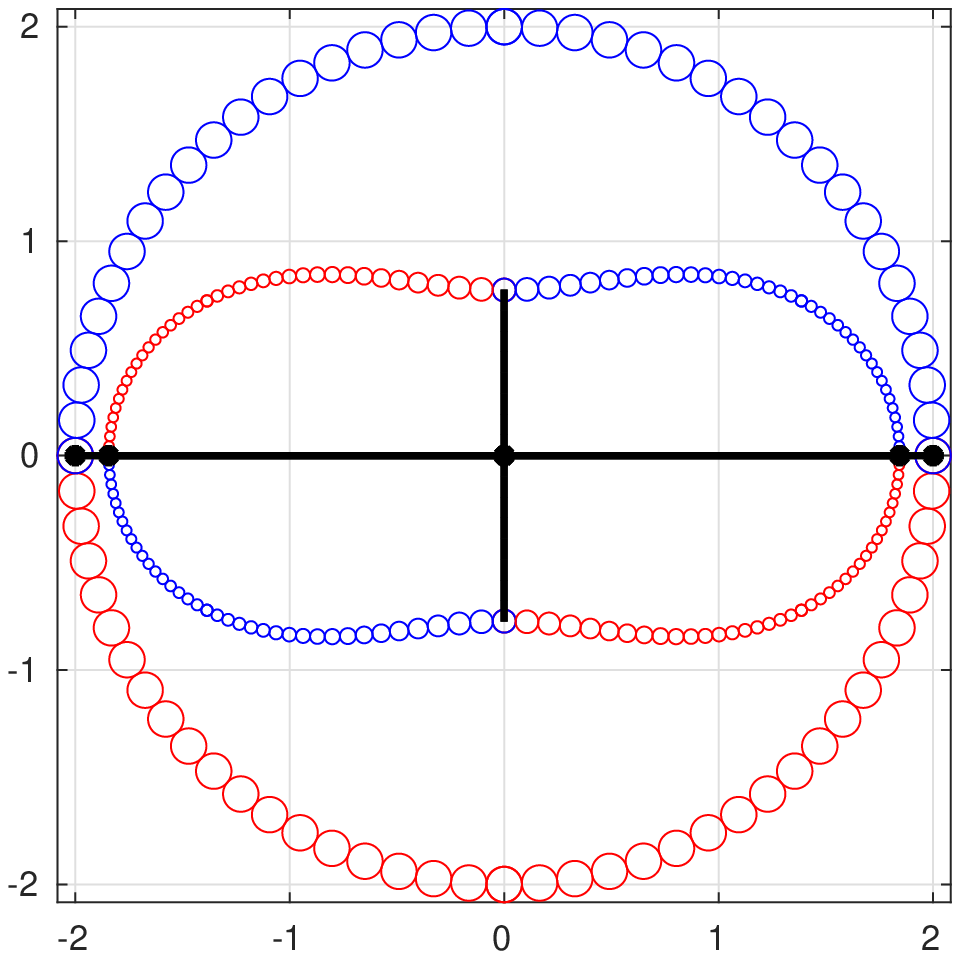}
\hspace*{-18mm}
\includegraphics[scale=0.6]{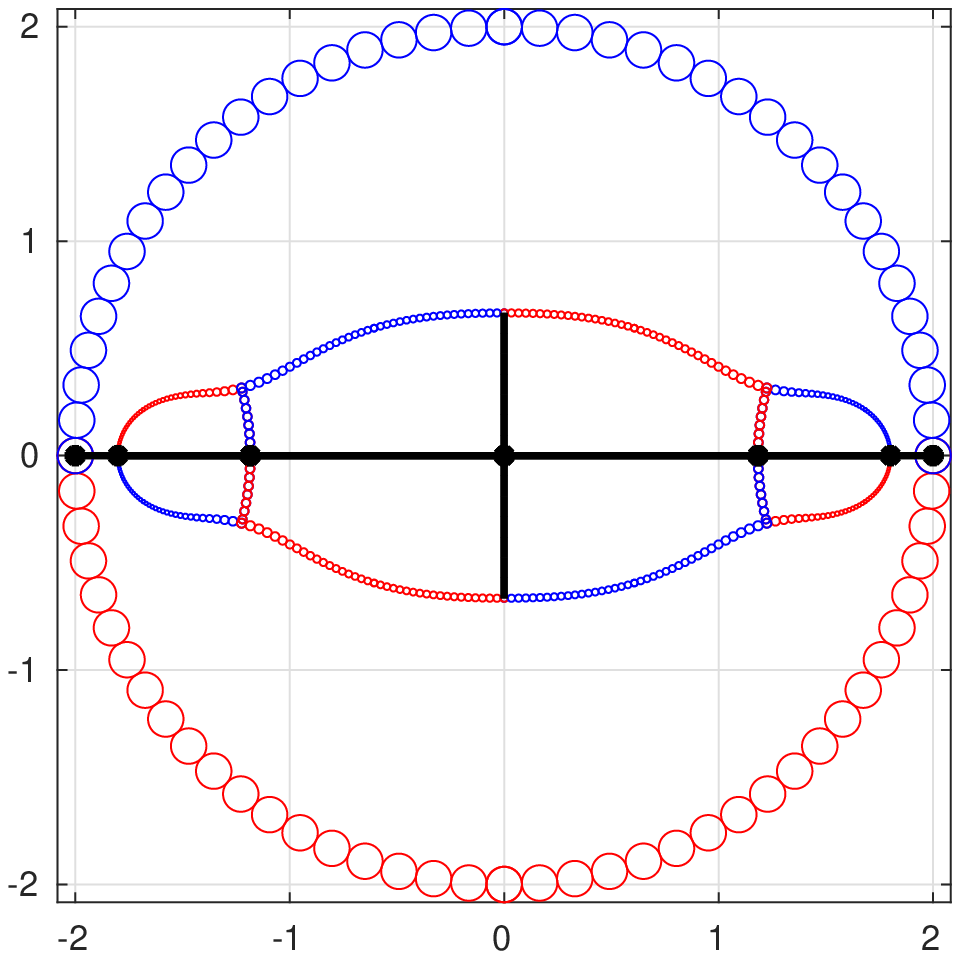}
\end{center}
\caption{The two first levels of tiles for the parameter set $\ivC=\langle c_\star, 10^{-10}\rangle$ using 39 discs to cover each initial half-disc. The colour coding indicates where a tile emanates from: blue -- upper half plane; red -- lower half plane.  (a) $\calP^{(2)}$ has four tiles. (b) $\calP^{(3)}$ has eight tiles. }
\label{fig:level_2_wide}
\end{figure}

We will describe how this is achieved in detail in Sections~\ref{subsec:circular_complex_arithmetic} and ~\ref{subsec:extended_circular_complex_arithmetic}. For now, let us assume that we can cover the boundary of each tile $P_i$ by a finite collection of small discs $d_{i,j}$, see Figure~\ref{fig:level_2_wide}. Computing upper and lower bounds ($\hi{s}_i$ and $\lo{s}_i$ , respectively) for each maximal distance $s_i$ is then a simple matter of traversing the collection of discs, and finding the maxmin/maxmax distance to the origin:
\begin{eqnarray}
\lo{s}_i &=& \maxmin_j\left\{\mid(d_{i,j}) - \rad(d_{i,j}) \right\},\\
\hi{s}_i &=& \maxmax_j\left\{\mid(d_{i,j}) + \rad(d_{i,j}) \right\}.
\end{eqnarray}

We end this section with a practical note. Each tile $P_i^{(k)}\in\calP^{(k)}$ is encoded via the binary representation of its index $(i)_{10} = (b_{k-1} \dots b_0)_2$, where $i=0,1,\dots, 2^k-1$. The encoding is used to select the correct branch of the inverse image: if $b_j = 0$, then the $j$:th inverse image belongs to the lower half plane. If $b_j = 1$, we select the inverse image in the upper half plane. This produces a labeling of tiles starting at the right-most position of the fourth quadrant and moving clock-wise through the 3:rd, 2:nd, and 1:st quadrant. By the four-fold symmetry, it suffices to compute the first quarter of tiles. They all belong to the fourth quadrant, and are indexed $i=0,\dots, 2^{k-2}-1$.

\subsection{Circular complex arithmetic}
\label{subsec:circular_complex_arithmetic}

There are several models for set-valued complex arithmetic: the fundamental choice lies in the the way a set of complex numbers is represented. A survey of different approaches can be found in \cite{ComplexBook}.
We will opt for circular sets; this will minimize the overestimation when computing the inverse images of the quadratic map.

A set in the complex plane is thus represented by a disc with midpoint $m\in\C$ and radius $r\ge 0$:
\begin{equation}
\ivZ = \langle m,r \rangle = \{z\in\C\colon |z-m|\le r\}.
\end{equation}
Computing the inverse map $p_c^{-1}(w) = \sqrt{z - c}$ involves two operations: subtraction and taking the square root. Subtracting a disk from another is straight-forward. We have
\begin{equation}\label{eq:disc_subtraction}
\ivZ_1 - \ivZ_2 = \langle m_1,r_1 \rangle- \langle m_2,r_2 \rangle = \langle m_1 - m_2, r_1 + r_2 \rangle,
\end{equation}
which incurs no overestimation: the resulting disc is a sharp enclosure of the difference. The square root, however, is more complicated as discs are not preserved under this operation. Given an input disc $\ivZ$ we must find a new disc $\ivW$ that contains the set $\{\sqrt{z} \colon z\in\ivZ\}$. We will only focus on one of the two branches of the square root (the second branch is simply the negative of the first).

First, we write the midpoint of $\ivZ = \langle m,r \rangle$ in polar coordinates: $m = \rho e^{i\theta}$. Assuming (for now) that $0\notin \ivZ$, we have $0\le r < \rho$. From the two positive numbers $\alpha_1 = \sqrt{\rho + r}$ and $\alpha_2 = \sqrt{\rho - r}$, we form $\tilde\rho = \frac{\alpha_1 + \alpha_2}{2}$ and $\tilde r = \frac{\alpha_1 - \alpha_2}{2}$. We then have the enclosure of the square root
\begin{equation}\label{eq:disc_sqrt}
\sqrt{\ivZ} = \left\langle \tilde\rho e^{i\theta /2}, \tilde r \right\rangle,
\end{equation}
which is the sharpest possible. When the ratio $r/\rho$ is small, the overestimation is very small. As the ratio approaches one, however, there is a significant difference between the true image of the square root and its disc enclosure given by (\ref{eq:disc_sqrt}). This is illustrated in Figure~\ref{fig:sqrt_disc}.

\begin{figure}[h]
\begin{center}
\includegraphics[scale=0.35]{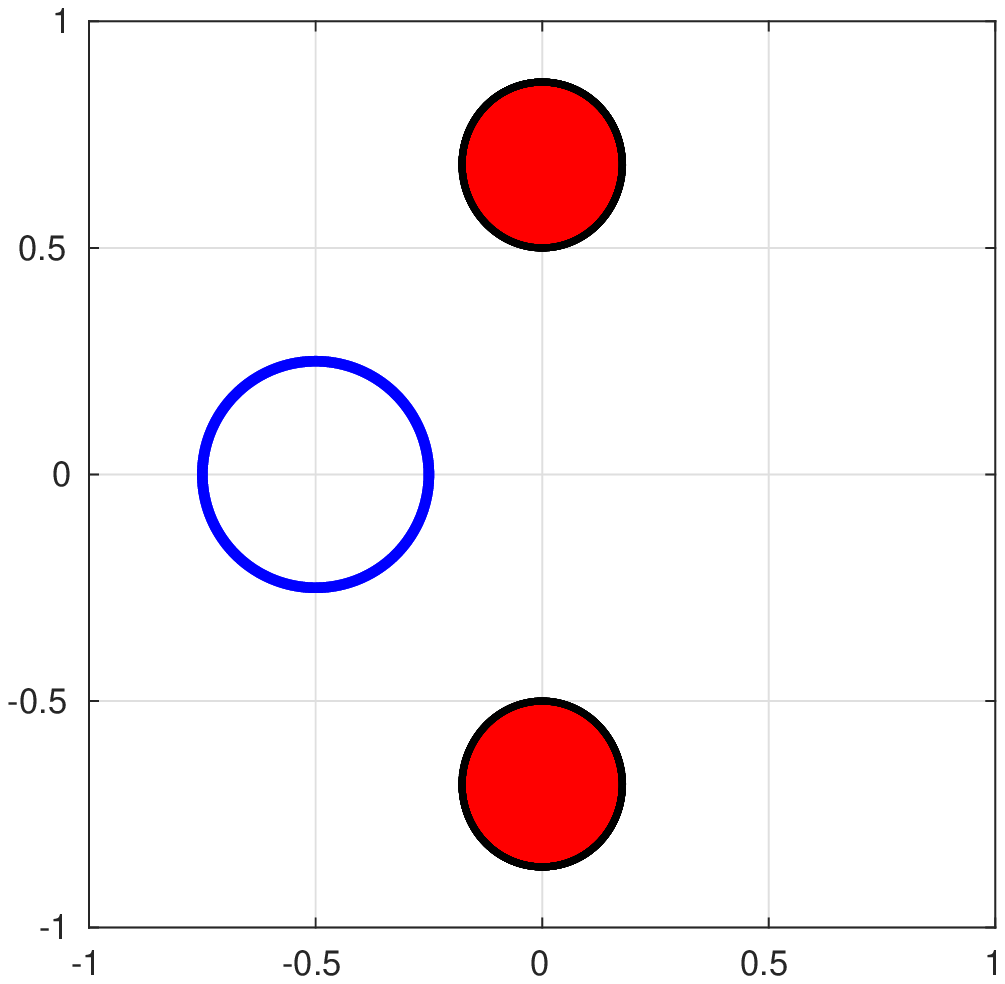}
\hspace*{-5mm}
\includegraphics[scale=0.35]{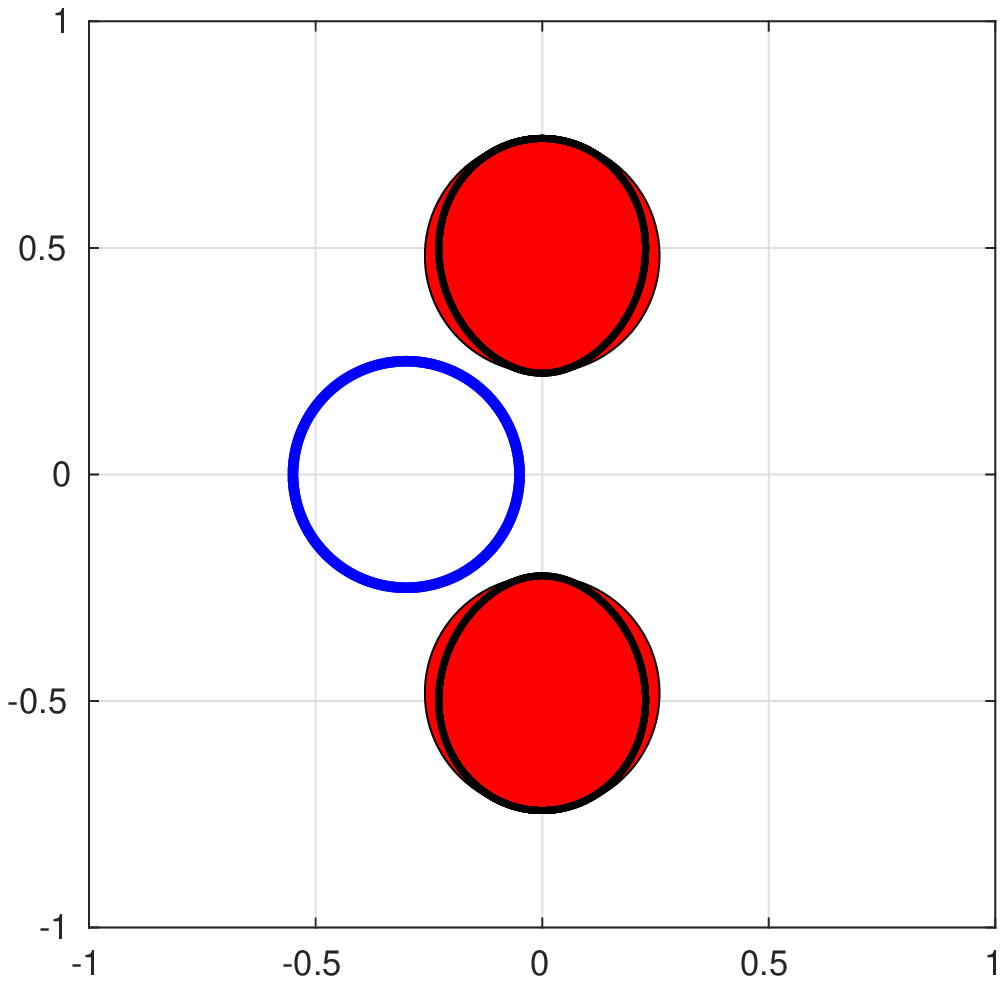}
\hspace*{-5mm}
\includegraphics[scale=0.35]{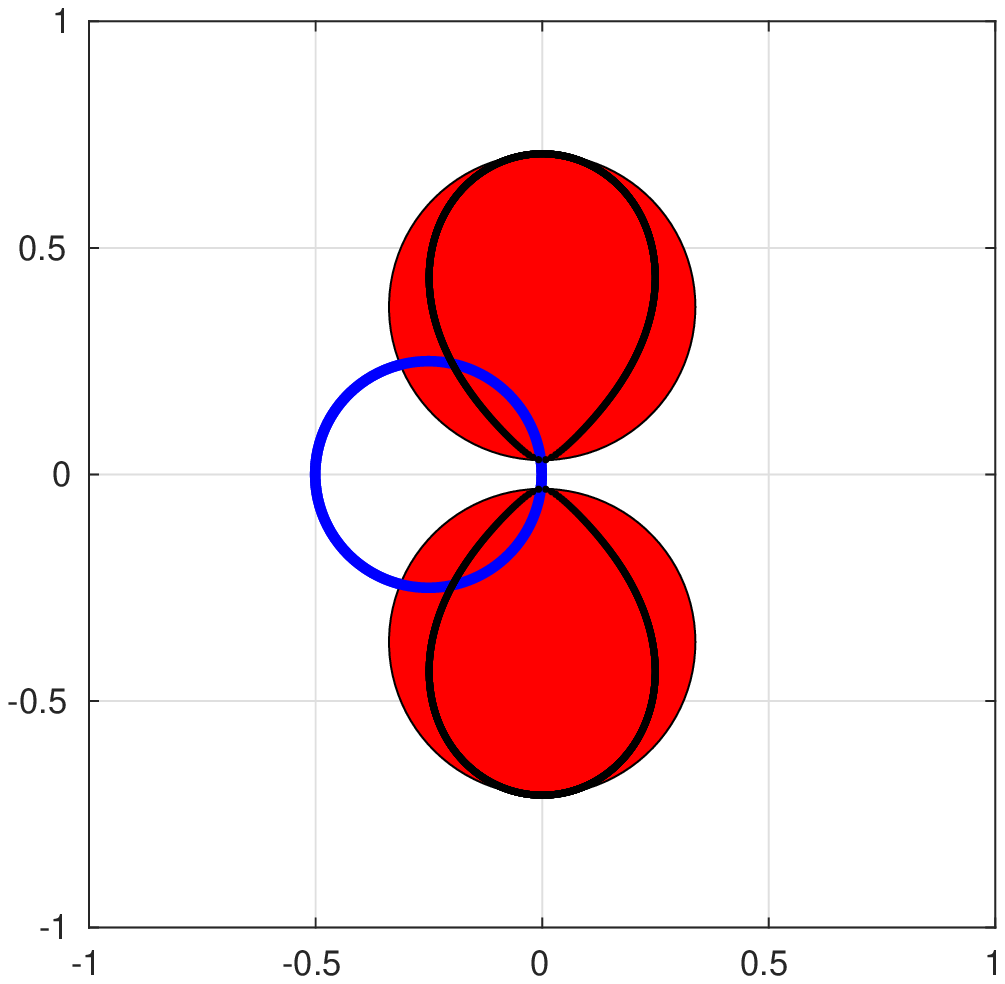}
\end{center}
\caption{Both branches of the complex square root; a comparison between the enclosures (red discs) given by (\ref{eq:disc_sqrt}) and the true image (the interior of the black curves). The domain is the blue disc $\langle m, r \rangle$ with $r = 1/4$ and a varying midpoint: (a) $m = -0.5$, (b) $m = -0.3$, and (c) $m = -0.251$.}
\label{fig:sqrt_disc}
\end{figure}

When the disc $\ivZ = \langle m,r \rangle$ contains the origin, the situation changes drastically. Throughout our computations, however, this will only occur in very special situations. We will handle these special cases by extending our circular complex arithmetic.

\subsection{Extended circular complex arithmetic}
\label{subsec:extended_circular_complex_arithmetic}

Considering the computations needed to establish our results, we want to extend the circular arithmetic introduced in Section~\ref{subsec:circular_complex_arithmetic}. In order to compute the tiles efficiently (see Section~\ref{subsec:rigorous_tile_computations}) we would like to be able to represent (and compute with) interval segments of the real and imaginary axes. Indeed, part of the boundary of many tiles are subsets of the real and imaginary axes, see Figure~\ref{fig:level_3_real_c}. To this end, we will extend the notion of a disc to include such subsets of the complex plane. We introduce the following notation:
\begin{eqnarray}\label{eq:extended_disc}
\langle m,r\rangle_x &=& \{z\in\C\colon |z-m|\le r \textrm{ and } \Im(z) = 0 \},\nonumber\\
\langle m,r\rangle_y &=& \{z\in\C\colon |z-m|\le r \textrm{ and } \Re(z) = 0 \}.
\end{eqnarray}
These \textit{collapsed} discs contain only real or imaginary numbers, respectively. We can now define precise sets of real parameters for which we will prove our theorems. As an example, for Theorem~\ref{thm:lower_bound_on_hd} and Theorem~\ref{thm:lower_bound_on_hd_c} , we will work on very small subsets of the real axis: $\ivC = \langle c_\star, 10^{-10}\rangle_x$. For Theorem~\ref{thm:uniform_bound_on_hd} we use intervals having much larger radii. In each inverse iteration of the quadratic map, we will be subtracting $\ivC$ from a disc. This operation is straight-forward; given two complex numbers $z_1 = x_1 + iy_1$ and $z_2 = x_2 + iy_2$, we define
\begin{eqnarray}\label{eq:extended_disc_subtraction}
\langle z_1,r_1 \rangle_x - \langle z_2,r_2 \rangle_x &=& \langle x_1-x_2, r_1 + r_2 \rangle_x, \\
\langle z_1,r_1 \rangle_y - \langle z_2,r_2 \rangle_x &=& \langle -x_2 + iy_1, r_1 + r_2 \rangle, \\
\langle z_1,r_1 \rangle\phantom{x} - \langle z_2,r_2 \rangle_x &=& \langle x_1-x_2+iy_1, r_1 + r_2 \rangle.
\end{eqnarray}
There are other combinations of operands, but these three are the only ones we will use.

We will also need to compute the square root of discs of type $\langle m,r\rangle_x$.  To simplify notation, let $\check{x}$ be a real number and consider the disc $\ivX = \langle \check{x},r\rangle_x$. There are three different situations that we must account for.

If $0\le \check{x} - r$, then the disc $\ivX$ consists of non-negative numbers only. We can use the same formula as (\ref{eq:disc_sqrt}) with $\theta = 0$, and making sure we stay in the correct class of discs:
\begin{equation}\label{eq:disc_sqrt3}
\sqrt{\ivX} = \sqrt{\langle \check{x},r\rangle_x} = \left\langle \tilde\rho, \tilde r \right\rangle_x.
\end{equation}
If $\check{x} + r\le 0$, then the disc $\ivX$ consists of non-positive numbers only. Once again, we can use the same formula as (\ref{eq:disc_sqrt}) with $\theta = \pi/2$. This corresponds to a rotation from $\theta = \pi$ to $\pi/2$, which brings us to the second class of extended discs:
\begin{equation}\label{eq:disc_sqrt4}
\sqrt{\ivX} = \sqrt{\langle \check{x},r\rangle_x} = \left\langle i\tilde\rho, \tilde r \right\rangle_y.
\end{equation}
Finally, if $\ivX$ contains zero in its interior, we split the disc into its non-negative and non-positive counterparts, and treat them according to (\ref{eq:disc_sqrt3}) and (\ref{eq:disc_sqrt4}), respectively.

This concludes our derivation of disc-valued arithmetic. In the actual numerical computations (see Section~\ref{subsec:computer_outputs}), all floating point operations are performed with outward directed rounding. This ensures that all results are true enclosures and can be used as rigorous bounds.

\subsection{Computer outputs}
\label{subsec:computer_outputs}

In this section, we will present the numerical results behind Theorem~\ref{thm:lower_bound_on_hd}, Theorem~\ref{thm:lower_bound_on_hd_c} and Theorem~\ref{thm:uniform_bound_on_hd}. For the basic interval arithmetic, we use the CAPD library (\cite{KapelaMrozekWilczakZgliczynski2020}), which is well established in the community of computer-assisted proofs in dynamical systems. The code we have developed, however, can easily be adapted to any other interval library. The main program is based on the ideas presented above, and consists of three main stages:
\begin{itemize}
\item[1.] Compute the tiles and the McMullen matrix elements.
\item[2.] Compute an approximation of a lower bound for the hyperbolic dimension.
\item[3.] Validate the lower bound.
\end{itemize}

In order to speed up the computations, we perform a large part on them in parallel. Indeed, each tile can be computed independently, and all matrix-vector products required in the implementation of Theorem~\ref{thm:collatz} benefit from parallelization.

Starting with Theorem~\ref{thm:lower_bound_on_hd}, we launch our program aiming at depth $k = 28$, with an initial covering of each half-disk made up of 199 full discs and one collapsed disc. The output is presented below.

\begin{verbatim}
------------------------------------------------------------
-------------------------- hypdim --------------------------
Specifications:
Running on host 'ha10' with 12 CPUs and 12 threads.
Started computations: Sun Sep 19 01:11:56 2021
------------------------------------------------------------
Tile computations:
Covering the half circles with 199 discs each.
Covering the x-axis with 1 collapsed discs.
Splitting the y-axis with tol: 0.01
c = {-1.401155189,0;1e-10}, dt = 0
Computing 67108864 tiles to depth 28
maxDiam(MM) = 0.093130430392924346
Wall time: 16 hours, 37 minutes, and 19 seconds.
CPU time : 198 hours, 37 minutes, and 15 seconds.
------------------------------------------------------------
Non-rigorous lower bound:
tol = 1.0e-10
hd_lo = 1.49781090621836621e+00
Wall time: 3 hours, 20 minutes, and 50 seconds.
CPU time : 5 hours, 42 minutes, and 37 seconds.
------------------------------------------------------------
Confirm lower bound:
Confirmed that HD >= 1.49781090621836621
Wall time: 0 hours, 15 minutes, and 52 seconds.
CPU time : 0 hours, 28 minutes, and 41 seconds.
------------------------------------------------------------
Ended computations: Sun Sep 19 21:25:58 2021
------------------------------------------------------------
\end{verbatim}

Summarizing, the entire run takes about 20 hours, using 12 threads. In forming the interval-valued McMullen matrix $\ivMM$, we compute $2^{26} = 67,108,864$ tiles. This part of the run is the most demanding: 80\% of the run-time is spent on this stage, despite the fact that this is where we gain most on parallelisation. For each tile, we compute rigorous bounds on the distance to the origin using the disc coverings, and then form the matrix entries as explained in Section~\ref{subsec:rigorous_tile_computations}. We note that the widest interval appearing in the McMullen matrix has diameter $0.09313...$. While computing the tiles, we split every produced disc of type $\langle m,r\rangle_y$ until the radius is less than $0.01$. This improves the quality of the tile covering by reducing the size of certain discs. This, in turn, prevents the matrix elements from becoming too wide.

The second stage, based on Section~\ref{subsec:bounding_the_spectral_radius} takes over three hours, and produces a good quality "guess" for a lower bound on the Hausdorff dimension. This is attained by a non-rigorous bisection scheme producing increasingly accurate estimates.

Finally, the third stage (again fully rigorous) validates the guess, and proves that the bound given in Theorem~\ref{thm:lower_bound_on_hd} is a true lower bound.

Continuing to Theorem~\ref{thm:lower_bound_on_hd_c}, we do exactly the same as above, but on a set of different parameters. We will not display the output from all ten runs; one typical run will suffice. Here we use the same depth $k = 28$, but reduce the initial covering of each half-disk: we use 39 full discs and one collapsed disc.

\begin{verbatim}
------------------------------------------------------------
-------------------------- hypdim --------------------------
Specifications:
Running on host 'abel' with 64 CPUs and 64 threads.
Started computations: Tue Mar 29 01:18:36 2022
------------------------------------------------------------
Tile computations:
Covering the half circles with 39 discs each.
Covering the x-axis with 1 collapsed discs.
Splitting the y-axis with tol: 0.01
c = {-1.7812168061,0;1e-10}, dt = 0
Computing 67108864 tiles to depth 28
maxDiam(MM) = 0.45503957706386089
Wall time: 3 hours, 47 minutes, and 0 seconds.
CPU time : 221 hours, 58 minutes, and 41 seconds.
------------------------------------------------------------
Non-rigorous lower bound:
tol = 1.0e-10
hd_lo = 1.32518996605940753e+00
Wall time: 6 hours, 10 minutes, and 29 seconds.
CPU time : 35 hours, 2 minutes, and 24 seconds.
------------------------------------------------------------
Confirm lower bound:
Confirmed that HD >= 1.32518996605940753
Wall time: 0 hours, 20 minutes, and 7 seconds.
CPU time : 2 hours, 30 minutes, and 55 seconds.
------------------------------------------------------------
Ended computations: Tue Mar 29 11:36:14 2022
------------------------------------------------------------
\end{verbatim}
Summarizing, the entire run takes about 10 hours, using 64 threads. Note that due to the higher number of threads, stage one now executes much faster than the second stage.

Turning now to Theorem~\ref{thm:uniform_bound_on_hd}, we recall that the largest parabolic parameter in $(c_\feig, -3/4)$ is $\hat c_2 = -5/4$, and the second largest is $\hat c_3\approx -1.3680989$. Therefore we can cover all parabolic parameters in $(c_\feig, -3/4)$ by the two intervals $I_1 = [-1.2501, -1.2499]$ and $I_2 = [-1.402, -1.350]$.

Note that $I_1 = \langle -5/4, 10^{-4} \rangle_x$, which we can treat as a small parameter interval. We launch our program aiming at depth $k = 18$, with an initial covering of each half-disk made up of 39 full discs and one extended disc. The output is presented below.

\begin{verbatim}
------------------------------------------------------------
-------------------------- hypdim --------------------------
Specifications:
Running on host 'wbt-asus' with 4 CPUs and 4 threads.
Started computations: Thu Dec  9 17:49:22 2021
------------------------------------------------------------
Tile computations:
Covering the half circles with 39 discs each.
Covering the x-axis with 1 collapsed discs.
Splitting the y-axis with tol: 0.01
c = {-1.25,0;1e-05}, dt = 0
Computing 65536 tiles to depth 18
maxDiam(MM) = 0.019450062619993824
Wall time: 49.8641 seconds.
CPU time : 192.2570 seconds.
------------------------------------------------------------
Non-rigorous lower bound:
tol = 1.0e-10
hd_lo = 1.33745732508541693e+00
Wall time: 1.0529 seconds.
CPU time : 4.0847 seconds.
------------------------------------------------------------
Confirm lower bound:
Confirmed that HD >= 1.33745732508541693
Wall time: 0.1742 seconds.
CPU time : 0.6625 seconds.
------------------------------------------------------------
Ended computations: Thu Dec  9 17:50:13 2021
------------------------------------------------------------
\end{verbatim}

This computation proves that $\hd(J_{\hat c_2}) > 4/3$. The remaining parabolic parameters, which all belong to the interval $I_2 = [-1.402, -1.350]$, are treated much the same. The only difference is that we split $I_2$ into 26 smaller subintervals (all of radius $10^{-3}$) and reduce the depth to $k = 17$. This is sufficient in order to obtain good enough lower bounds on the Hausdorff dimensions. We will not report all verbatim output from these computations; the lower bounds ranged from $1.33929890631116133$ to $1.35502280450309742$ (all greater than $4/3$). The total computation time was 4 minutes and 15 seconds when run on 12 cores in parallel.

Finally, we redo the computations above with a bit more effort, and over a wider parameter domain. To be more specific: we split the domain $[-2, 2]$ into 21000 subintervals (of varying radii), each on which we compute a rigorous lower bound for $\hypd(J_c)$. This produces a piecewise constant function acting as a lower bound on the hyperbolic dimension of the Julia sets for the full domain $[-2, 2]$, and is illustrated in Figure~\ref{fig:stairs_plot}(a). The underlying computations took three days on ten 12-core machines. Using only a subset of this information, we can (again) prove Theorem~\ref{thm:uniform_bound_on_hd} by showing that the hyperbolic dimension of the Julia set satisfies $\hypd(J_c) > 4/3$ over the entire interval $[-1.53, -1.23]$, see Figure~\ref{fig:level_14_real_c}(a). Since this interval contains all real parabolic parameters $\hat c\in (c_{\feig}, -3/4)$, the theorem follows.

We end this section with an illustration of a rigorous cover of the Feigenbaum Julia set, see Figure~\ref{fig:level_14_real_c}(b).
\begin{figure}[h]
\begin{center}
\includegraphics[scale=0.55]{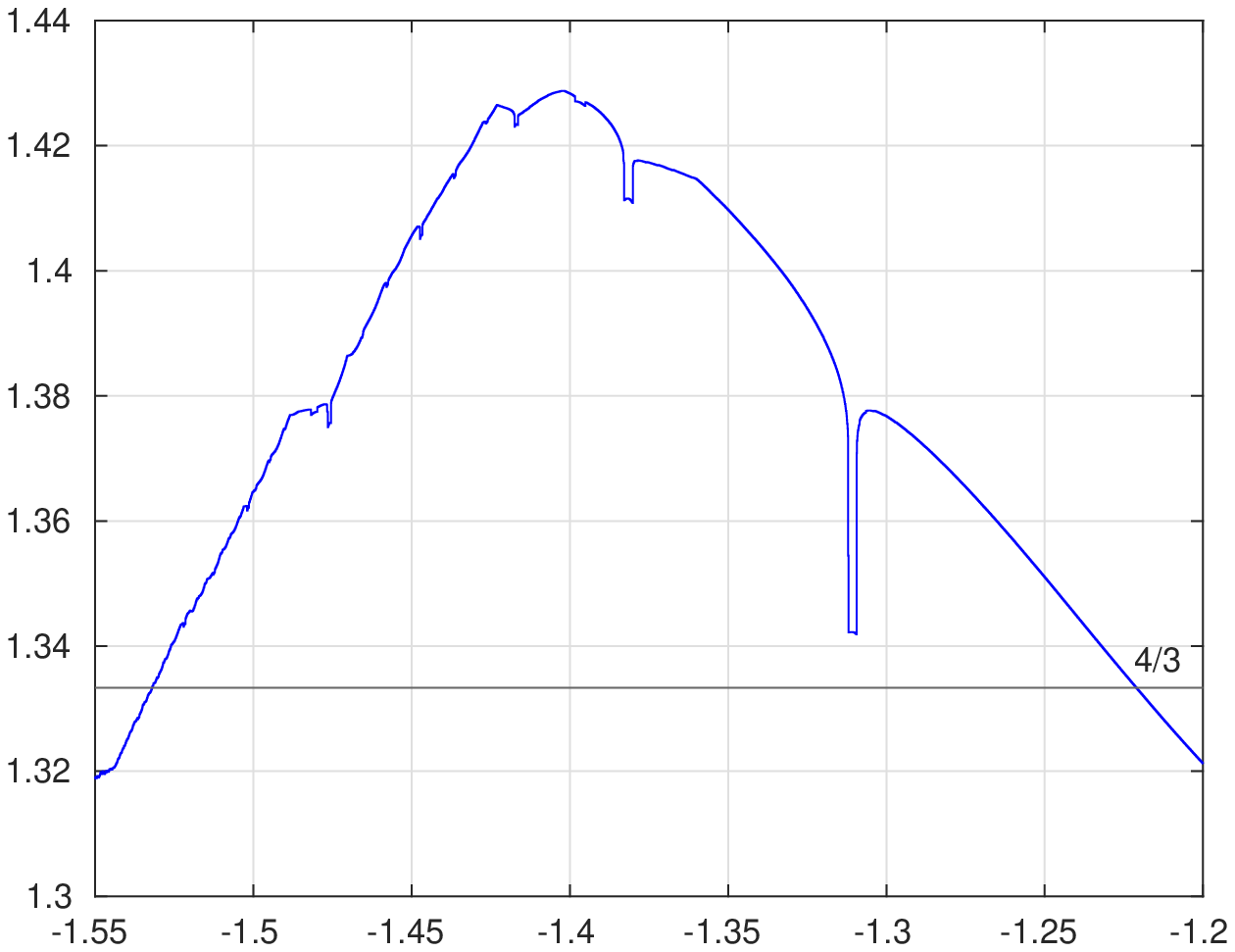}
\hspace*{-5mm}
\includegraphics[scale=0.55]{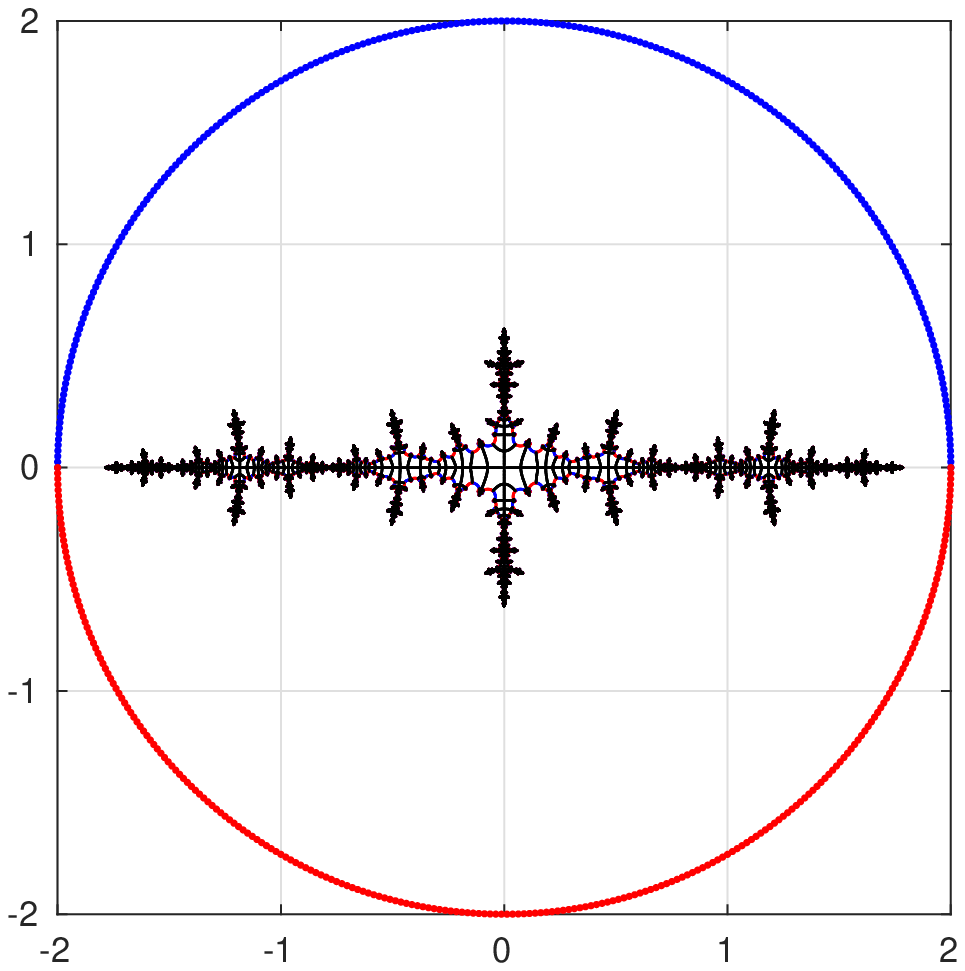}
\end{center}
\caption{(a) A graphical proof of Theorem~\ref{thm:uniform_bound_on_hd}.  (b) The level $k=14$ cover of the Feigenbaum Julia set.}
\label{fig:level_14_real_c}
\end{figure}

\appendix

\section{Technical statements}

Here we collect several technical statements and their proofs.

\begin{lemma}\label{Koebe1}
	Let $U\subset\hat{\C}$ be a topological disk and assume that $f\colon U\to\hat{\C}$ is a univalent map. If there exists $r>0$, such that each of the sets $\hat{\C}\setminus U$ and $\hat{\C}\setminus f(U)$ contains a spherical disk of radius $r$, then for any open subset $\tilde U\Subset U$, compactly contained in $U$, the distortion of $f$ on $\tilde U$, measured in the spherical metric, is bounded by some constant $K\ge 1$. This constant depends on the domain $\tilde U$ and the real number $r$.	
\end{lemma}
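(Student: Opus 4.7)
The approach is a standard reduction to the planar Koebe Distortion Theorem, using rotations of the Riemann sphere (which are spherical isometries) to move the omitted disks to neighborhoods of $\infty$, and then exploiting the equivalence of spherical and Euclidean metrics on bounded sets.

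First, pick spherical disks $B_1\subset\hat\C\setminus U$ and $B_2\subset\hat\C\setminus f(U)$ of radius $r$, centered at points $p_1, p_2$, and let $\rho_1, \rho_2\colon\hat\C\to\hat\C$ be rotations of the sphere with $\rho_i(p_i)=\infty$. Being spherical isometries, each $\rho_i$ sends $B_i$ to the spherical disk of radius $r$ centered at $\infty$, which in Euclidean coordinates has the form $\{|z|>R_r\}\cup\{\infty\}$ for some explicit $R_r>0$ depending only on $r$. Consequently $V_1:=\rho_1(U)$ and $V_2:=\rho_2(f(U))$ both lie in the Euclidean disk $\{|z|\le R_r\}$, and the conjugate $g:=\rho_2\circ f\circ\rho_1^{-1}\colon V_1\to V_2$ is a univalent map between bounded plane domains. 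Set $\tilde V:=\rho_1(\tilde U)\Subset V_1$.

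On the bounded region $\{|z|\le R_r\}$ the spherical and Euclidean metrics are uniformly comparable, with ratio controlled in terms of $R_r$. In particular, the positive spherical distance from $\tilde U$ to $\partial U$ translates into a positive Euclidean distance $d':=\dist(\tilde V, \partial V_1)>0$, bounded below in terms of the data $\tilde U\Subset U$ and $r$. For each $z_0\in\tilde V$ the Euclidean disk $D_{d'}(z_0)$ lies in $V_1$; restricting $g$ and rescaling gives a univalent map of the unit disk to which the classical planar Koebe Distortion Theorem applies. Covering the compact set $\tilde V$ by a finite chain of such disks and composing the local estimates yields a uniform Euclidean distortion bound $|g'(z)|/|g'(w)|\le K_0$ for all $z, w\in\tilde V$, with $K_0$ depending only on $d'$ and the Euclidean diameter of $\tilde V$.

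To convert back, I would use the identity $|g'(z)|_s=|g'(z)|_{\mathrm{eucl}}\cdot(1+|z|^2)/(1+|g(z)|^2)$; on $V_1\cup V_2\subset\{|z|\le R_r\}$ the correction factor is pinched between positive constants depending only on $R_r$. Hence the spherical distortion of $g$ on $\tilde V$ is uniformly bounded, and since the rotations $\rho_1, \rho_2$ preserve the spherical metric, the same bound transfers to the spherical distortion of $f=\rho_2^{-1}\circ g\circ\rho_1$ on $\tilde U$. The main obstacle is purely bookkeeping: verifying that every intermediate constant ($R_r$, $d'$, and the Koebe constant $K_0$) depends only on the allowed data, which is straightforward once the spherical-to-Euclidean conversion has been localized to the bounded region $\{|z|\le R_r\}$.
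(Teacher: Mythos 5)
Your proof is correct and follows essentially the same route as the paper: precompose and postcompose with spherical isometries to push the omitted disks to neighborhoods of $\infty$, placing $U$ and $f(U)$ inside a Euclidean disk of radius controlled by $r$, then use the equivalence of spherical and Euclidean metrics on that bounded region and apply the classical planar Koebe Distortion Theorem. You merely spell out details the paper leaves implicit (the explicit spherical-to-Euclidean derivative conversion and the finite chaining/covering argument over the compact set $\tilde V$).
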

\begin{proof}
	After precomposing and postcomposing $f$ with appropriate isometries of the sphere, we may assume that both $U$ and $f(U)$ are contained in a Euclidean disk of radius $\rho(r)$, centered at zero. Here $\rho(r)$ is some increasing function of $r$. The latter implies that the restrictions of the spherical and the Euclidean metrics on $U\cup f(U)$ are equivalent with the equivalence constant depending on $r$. Now the statement of the lemma follows from the standard Koebe Distortion Theorem.	
\end{proof}

\begin{lemma}\label{Koebe2}
	Let $\mathcal F\colon\mathcal U\to\mathcal F(\mathcal U)$ be a partitioned holomorphic dynamical system that consists of at least two maps $f_j$ as in~(\ref{hol_sys_maps_eq}). Then there exists $r>0$, such that for any open spherical disk $D\subset\mathcal U\setminus PC(\mathcal F)\subset \hat{\C}$ of radius $r$, a positive integer $k\in\N$ and any connected component $D'$ of $\mathcal F^{-k}(D)$, the sets $\hat{\C}\setminus D$ and $\hat{\C}\setminus D'$ contain disks of radius $r$.
\end{lemma}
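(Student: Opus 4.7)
The plan is to exploit the fact that every connected preimage $D'$ automatically lies inside a single tile, so its complement contains every \emph{other} tile; combined with the hypothesis that there are at least two tiles, this yields a uniform $r>0$.

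First I would argue the localization. Since the tiles $U_1,\dots,U_m$ are pairwise disjoint open sets whose union is $\mathcal U$, any connected subset of $\mathcal U$ is contained in a single $U_j$. Every connected component $D'$ of $\mathcal F^{-k}(D)$ sits inside $\mathcal U$ (because $\mathcal F^{-k}$ is meaningful only on points whose forward iterates all stay in the domain $\mathcal U$), and is by definition connected; hence $D'\subset U_{j_0}$ for some $j_0\in\{1,\dots,m\}$. This gives the inclusion $\hat{\C}\setminus D'\supset\hat{\C}\setminus U_{j_0}$. Because $m\geqslant 2$, the set $\hat{\C}\setminus U_{j_0}$ contains every tile $U_i$ with $i\ne j_0$; each such $U_i$ is a nonempty open subset of $\hat{\C}$ and therefore contains a spherical disk of positive radius. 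For each index $j$, fix once and for all a spherical disk $B_j\subset\hat{\C}\setminus U_j$ of radius $s_j>0$. Since there are finitely many tiles, the number $s:=\min_{1\leqslant j\leqslant m}s_j$ is strictly positive.

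Next I would dispose of the condition on $D$ itself. Choose $r_0>0$ small enough (say, smaller than half the spherical diameter of $\hat{\C}$) so that the complement in $\hat{\C}$ of any spherical disk of radius $r_0$ contains a spherical disk of radius $r_0$ — the complement of an open disk of radius $\rho$ contains an antipodal disk of radius (spherical diameter) $-\rho$, which is $\geqslant\rho$ for $\rho$ small. Set $r:=\min(s,r_0)>0$. For any spherical disk $D\subset\mathcal U\setminus PC(\mathcal F)$ of radius $r$, the complement $\hat{\C}\setminus D$ contains a spherical disk of radius $r$ by the choice of $r_0$, and for any component $D'$ of $\mathcal F^{-k}(D)$ the complement $\hat{\C}\setminus D'$ contains $B_{j_0}$, a spherical disk of radius $s_{j_0}\geqslant s\geqslant r$.

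The argument is essentially elementary; the only point requiring care is the interpretation of a component of $\mathcal F^{-k}(D)$ as a subset of $\mathcal U$, so that connectedness forces it into a single tile. Notice that the hypothesis $D\cap PC(\mathcal F)=\varnothing$ is not actually used in this lemma — it becomes relevant only in the subsequent application, where one wishes to invoke the Koebe-type estimate of Lemma~\ref{Koebe1} for univalent branches of $\mathcal F^{-k}$ on $D$.
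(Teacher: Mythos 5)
Your proof is correct and takes essentially the same approach as the paper: since the tiles are a disjoint open cover of $\mathcal U$, the connected set $D'$ lies in a single tile, so its complement contains some other tile, and with $m\geqslant 2$ a uniform small $r$ exists. The only cosmetic difference is that the paper also localizes $D$ into a single tile by the same connectedness observation, whereas you dispose of $D$ via the antipodal-disk geometry of the sphere; both work, and your closing remark that the avoidance of $PC(\mathcal F)$ is not actually used in this lemma is also accurate.
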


\begin{proof}
	Observe that each of the topological disks $D$ and $D'$ must be contained in one of the (possibly different) domains $U_j$. If there are at least two such domains, then after choosing a sufficiently small $r$, the statement becomes obvious.
\end{proof}

\begin{proof}[\bf Proof of Lemma \ref{concavity_lemma}] Let $M=\{m_{i,j}\}_{1\leqslant i,j\leqslant n}$ be a $n\times n$ primitive matrix and $M(\delta)=\{m_{i,j}^\delta\}_{1\leqslant i,j\leqslant n}$, $\delta \in\mathbb R$. We need to show that the spectral radius $\varrho(M(\delta))$ is a concave up function on $\mathbb R$.

 Let $m_k(\delta)$ be the entry in the first column and the first row of $(M(\delta))^k$. From Perron-Frobenius Theorem one can deduce that $(m_k(\delta))^{\frac{1}{k}}\to \varrho(\delta)$ when $k\to \infty$. Observe that for every $k$ one has
$$
m_k(\delta)=\sum\limits_{j=1}^r c_j^\delta,
$$
 for some $r\in\mathbb Z_+$ and $c_1,\ldots c_r>0$ depending on $k$. Let us show that $(m_k(\delta))^{\frac{1}{k}}$ is a concave up function for every $k$.

On has:
\begin{align*}\frac{d}{d\delta}(m_k(\delta))^{\frac{1}{k}}=
\frac{d}{d\delta}\left(\sum c_j^\delta\right)^{\frac{1}{k}}=\frac{\sum\log c_j\cdot c_j^\delta}{k\left(\sum c_j^\delta\right)^{\frac{k-1}{k}}}
\end{align*}
The second derivative of $(m_k(\delta))^\frac{1}{k}$ is equal to:
\begin{align*}\frac{d^2}{d\delta^2}\left(\sum c_j^\delta\right)^{\frac{1}{k}}=\frac{\left(\sum(\log c_j)^2\cdot c_j^\delta\right)\sum c_j^\delta-\frac{k-1}{k}\left(\sum\log c_j\cdot c_j^\delta\right)^2}{k\left(\sum c_j^\delta\right)^{\frac{2k-1}{k}}}.
\end{align*}
\begin{align*}\frac{d^2}{d\delta^2}\left(\sum c_j^\delta\right)^{\frac{1}{k}}=\frac{\left(\sum(\log c_j)^2\cdot c_j^\delta\right)\sum c_j^\delta-\frac{k-1}{k}\frac{\left(\sum\log c_j\cdot c_j^\delta\right)^2}{\left(\sum c_j^\delta\right)^{\frac{1}{k}}}}{k\left(\sum c_j^\delta\right)^{\frac{2k-2}{k}}}.
\end{align*}

By Cauchy-Schwarz inequality one has:
$$\left(\sum(\log c_j)^2\cdot c_j^\delta\right)\sum c_j^\delta\geqslant \left(\sum|\log c_j|\cdot c_j^\delta\right)^2.$$ We obtain that $\frac{d^2}{d\delta^2}(m_k(\delta))^\frac{1}{k}\geqslant 0$ and thus $(m_k(\delta))^\frac{1}{k}$ is concave up. Taking the limit when $k\to\infty$ we obtain that $\varrho(M(\delta))$ is concave up.
\end{proof}

\paragraph*{\bf Rigorous approximations of real Feigenbaum parameters.}
   Our computations rely on the theory of kneading sequences (see~\cite{Milnor_Thurston,Collet_Eckmann_book_80}). Given a unimodal map $f(z)$ with the critical point at $0$ the kneading sequence is the sequence $K(f)=\{k_1,k_2,\ldots\}\subset \{-1,0,1\}^\mathbb N$, where $k_i=-1$ if $f^i(0)<0$, $k_i=1$ if $f^i(0)>0$ and $k_i=0$ if $f^i(0)=0$. Given a Feigenbaum map $p_c(z)=z^2+c,c\in\mathbb R,$ with a stationary combinatorics of period $n$ the map $p_c^n(z)$ is hybrid equivalent to $p_c(z)$ near the origin. In particular, there is a homeomorphism $h$ from a segment $I\subset R$ containing the postcritical set of $f_c$ to a segment $I_1$ containing zero such that $p_c^n=h\circ p_c\circ h^{-1}$. From this we conclude that the kneading sequence $K(p_c)$ has the properties 
   \begin{equation}\label{EqKneadingSeqCalc} k_{n^lm}=(k_n/k_1)^lk_m=(-k_n)^lk_m,\;\;k_{s+n}=k_s\;\;\text{if}\;\;n\;\;\text{does not divide}\;\;s.
   \end{equation} 
This allows us to compute any number of elements of the kneading sequence $K(p_c)$ given $k_1,\ldots,k_n$.

   Notice that not all sequences $k_1,\ldots,k_n$ of length $n$ are realizable as an initial segment of a kneading sequence. To find all realizable sequences we can start by finding all real superattracting parameters of period $n$. Given a real superattracting parameter $a$ of period $n$, it follows from~\cite{McMullen_book2,Lyubich_hairiness} that there exists a unique period $n$ real Feigenbaum parameter $c=c(a)$ such that $k_i(c)=k_i(a)$ for $i=1,\ldots,n-1$ and $k_n(c)=k_n(a-\epsilon)$ for a sufficiently small $\epsilon>0$. The value of $k_n(c)$ can also be determined as $k_n(c) = -\mathrm{sign}\left(\frac{dp_x^n(0)}{dx}|_{x=a}\right)$.

   Given a point $b\in\mathbb R$, $b\neq c$, one can find out whether $b<c$ by comparing the kneading sequences of $p_b$ and $p_c$. Namely, let $i$ be the smallest number such that $k_i(b)\neq k_i(c)$. If $\mathrm{sign}(k_i(b))\neq\mathrm{sign}\left(\frac{dp_x^i(0)}{dx}|_{x=b}\right)$ then $b<c$, otherwise $b>c$.

   Now, from the above considerations, one can compute an approximation of $c$ by repeatedly cutting a segment containing $c$ into halves, starting
    with the segment $[-2,0]$. Using interval-valued numerics one can control the accumulated errors and ensure that the approximations of the Feigenbaum parameters in Theorem~\ref{thm:lower_bound_on_hd_c} are obtained with appropriate accuracy. 

\bibliographystyle{amsalpha}
\bibliography{HDbiblio}

\end{document}